\newtheorem{theorem}{Theorem}
\newtheorem{corollary}{Corollary}
\newtheorem{proposition}{Proposition}
\newtheorem{lemma}{Lemma}
\newtheorem{remark}{Remark}
\newtheorem{conjecture}{Conjecture}
\newtheorem{example}{Example}
\newtheorem{definition}[theorem]{Definition}
\def \T{\textup{T}}
\def \rank{\textup{rank}}
\def \sgn{\textup{sgn}}
\def \tr{\textup{tr}}
\def \diag{\textup{diag~}}
\newcommand{\rmnum}[1]{\romannumeral #1}
\newcommand{\Rmnum}[1]{\expandafter\@slowromancap\romannumeral #1@}
\title{Graphs with at most one generalized cospectral mate}
\author{\small Wei Wang$^{{\rm a}}$\quad\quad Wei Wang$^{\rm b}$\thanks{Corresponding author: wang\_weiw@163.com}\quad\quad Tao Yu$^{{\rm c}}$
\\
{\footnotesize$^{\rm a}$School of Mathematics, Physics and Finance, Anhui Polytechnic University, Wuhu 241000, P. R. China}\\
{\footnotesize$^{\rm b}$School of Mathematics and Statistics, Xi'an Jiaotong University, Xi'an 710049, P. R. China}\\
{\footnotesize$^{\rm c}$School of Mathematics, 686 Cherry Street, Georgia Institute of Technology, Atlanta, GA 30332, USA}
}
\date{}
\begin{document}
 \maketitle
\begin{abstract}
	Let $G$ be an $n$-vertex graph with adjacency matrix $A$, and $W=[e,Ae,\ldots,A^{n-1}e]$ be the walk matrix of $G$, where $e$ is the all-one vector. In	Wang [J. Combin. Theory, Ser. B, 122 (2017): 438-451], the author showed that any graph $G$ is uniquely determined by its generalized spectrum (DGS) whenever $2^{-\lfloor n/2 \rfloor}\det W$ is odd and square-free. In this paper, we introduce a large family of graphs
	$$\mathcal{F}_n=\{\text{$n$-vertex graphs~} G\colon\, 2^{-\lfloor n/2 \rfloor}\det W =p^2b\text{~and~} \rank~W=n-1\text{~over~} \mathbb{Z}/p\mathbb{Z}\},$$
	where $b$ is odd and square-free, $p$ is an odd prime and $p\nmid b$. We prove that any graph in $\mathcal{F}_n$ either is DGS or has exactly one generalized cospectral mate up to isomorphism. Moreover, we show that the problem of finding the generalized cospectral mate for a graph in $\mathcal{F}_n$ is equivalent to that of generating an appropriate rational orthogonal matrix from a given integral vector. This equivalence essentially depends on an amazing property of graphs in terms of generalized spectra, which states that any symmetric integral matrix generalized cospectral with the adjacency matrix of some graph  must be an adjacency matrix. Based on this equivalence, we develop an efficient algorithm to decide whether a given graph in $\mathcal{F}_n$ is DGS and further to find the unique  generalized cospectral mate when it is not.  We give some
	experimental results on graphs with at most 20 vertices, which suggest that $\mathcal{F}_n$ may have a positive density (nearly $3\%$) and possibly almost all graphs in $\mathcal{F}_n$ are DGS as $n\rightarrow \infty$.  This gives a supporting evidence for Haemers' conjecture that almost all graphs are determined by their spectra. \\

\noindent\textbf{Keywords}: generalized spectrum; generalized cospectral graphs; generalized cospectral mate; Smith normal form; rational orthogonal matrix

\noindent
\textbf{AMS Classification}: 05C50
\end{abstract}
\section{Introduction}
\label{intro}
All graphs considered in this paper are simple and undirected. The \emph{spectrum} of a graph $G$, denoted by $\sigma(G)$, is the multiset of all eigenvalues of
its adjacency matrix. The \emph{generalized spectrum} of a graph $G$ is defined to be the pair $(\sigma(G), \sigma(\bar{G}))$, where
$\emph{G}$ is the complement of $G$. Two graphs are \emph{generalized cospectral} if they share the same generalized spectrum. Clearly, isomorphic graphs are generalized cospectral, but the converse is not true in general.  Two  graphs $G$ and $H$  are called a pair of \emph{generalized cospectral  mates} if they are generalized cospectral but nonisomorphic.  A graph $G$ is \emph{determined by generalized spectrum} (or DGS for short) if it has no generalized cospectral mates, that is, all graphs having the same generalized spectrum as $G$ are isomorphic to $G$.  We remark that in the context of classical adjacency spectrum, the  corresponding notions have received considerable attention. We refer the readers to \cite{ervdamLAA2003,ervdamDM2009}.

We are mainly concerned with the generalized spectra of graphs in this paper. For a given graph $G$, a natural problem is to determine whether $G$ is DGS or not, or more subtly, to find some or all (if any) generalized cospectral  mates of $G$.  The problem turns out to be very difficult in general. Nevertheless, Wang \cite{wang2013ElJC,wang2017JCTB} found a strong connection between this problem and the properties of walk matrices of graphs. For a graph $G$ with $n$ vertices, the \emph{walk matrix} of $G$, denoted by $W(G)$ or simply $W$, is the $n\times n$ matrix $[e,Ae,\ldots,A^{n-1}e]$, where $A$ is the adjacency matrix of $G$ and $e$ is the all-one column  vector of dimension $n$. The following simple arithmetic condition on $\det W$ for a graph $G$ being DGS was obtained in \cite{wang2017JCTB}.
\begin{theorem}\cite{wang2017JCTB}\label{osf}
	If $2^{-\lfloor \frac{n}{2}\rfloor}\det W$ (which is always an integer) is odd and square-free, then $G$ is DGS.
\end{theorem}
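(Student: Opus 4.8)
The plan is to translate the problem into one about rational orthogonal matrices and then to bound their denominators. By the Johnson--Newman theorem (together with the fact that the intertwiner can be chosen rational), if $H$ is generalized cospectral with $G$ then there is a rational orthogonal $Q$ with $Qe=e$ and $Q^{\T}A(G)Q=A(H)$; call such a $Q$ a regular rational orthogonal matrix. Attach to it its \emph{level} $\ell=\ell(Q)$, the least positive integer with $Q_{0}:=\ell Q\in M_{n}(\mathbb{Z})$. The endgame is then immediate: if $\ell=1$ for every regular rational orthogonal $Q$ taking $A(G)$ to an adjacency matrix, such a $Q$ is an integral orthogonal matrix, hence a signed permutation matrix, and $Qe=e$ forces every sign to be $+1$; so $Q$ is a permutation matrix $P$, $A(H)=P^{\T}A(G)P$, and $H\cong G$. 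Thus $G$ is DGS, and everything comes down to proving $\ell=1$ under the hypothesis.

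I would reduce $\ell=1$ to two arithmetic facts about $\ell$: (i) $\ell$ is odd; (ii) every odd prime $p$ dividing $\ell$ satisfies $p^{2}\mid\det W$. Given these, the theorem follows: since $2^{\lfloor n/2\rfloor}\mid\det W$ always (a consequence of $e^{\T}A^{2k}e\equiv e^{\T}A^{k}e\pmod 2$, which forces $\rank_{\mathbb{F}_{2}}W\le\lceil n/2\rceil$), the hypothesis says precisely that the odd part $2^{-\lfloor n/2\rfloor}\det W$ of $\det W$ is square-free; as $\ell$ is odd, any odd prime $p\mid\ell$ would put $p^{2}$ into that odd square-free part, a contradiction, so $\ell=1$. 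Fact (i) I would treat modulo $2$: with $2\mid\ell$ the identities $Q_{0}Q_{0}^{\T}=Q_{0}^{\T}Q_{0}=\ell^{2}I$, $Q_{0}e=\ell e$, and $\ell^{2}A(H)=Q_{0}^{\T}A(G)Q_{0}$ become, over $\mathbb{F}_{2}$, statements forcing $\operatorname{colsp}\overline{Q_{0}}$ into a subspace that is isotropic both for the standard form and for $A(G)\bmod2$ (which is alternating, $A(G)$ having zero diagonal); a dimension count against the bound $\rank_{\mathbb{F}_{2}}W\le\lceil n/2\rceil$, which is tight (an equality) under the hypothesis, should rule out $2\mid\ell$. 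One may also be able to quote the earlier observation that the level of such a $Q$ is always odd.

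The substance is fact (ii). Fix a prime $p\mid\ell$ and reduce mod $p$: from $Q_{0}e=\ell e$, $Q_{0}^{\T}W=\ell W(H)$ and $Q_{0}W(H)=\ell W$ we get $\overline{Q_{0}}\,\overline{e}=0$, $\overline{Q_{0}}^{\T}\,\overline{W}=0$ and $\overline{Q_{0}}\,\overline{W(H)}=0$ over $\mathbb{F}_{p}$, while $Q_{0}Q_{0}^{\T}=Q_{0}^{\T}Q_{0}=\ell^{2}I\equiv0$ makes both the row space and the column space of $\overline{Q_{0}}$ totally isotropic for the standard bilinear form. Minimality of $\ell$ forces $\overline{Q_{0}}\neq0$, so $r:=\rank_{\mathbb{F}_{p}}\overline{Q_{0}}\ge1$; and $\operatorname{colsp}\overline{W}\subseteq\ker\overline{Q_{0}}^{\T}$ gives $\rank_{\mathbb{F}_{p}}W\le n-r$. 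If $r\ge2$ then $\rank_{\mathbb{F}_{p}}W\le n-2$, hence $p^{2}\mid\det W$, and we are done. The remaining, and I expect hardest, case is $r=1$, say $\overline{Q_{0}}=uv^{\T}$: here I still need $\rank_{\mathbb{F}_{p}}W\le n-2$. My approach would be to descend one $p$-adic layer in the identities $Q_{0}^{\T}Q_{0}=\ell^{2}I$ and $\ell^{2}A(H)=Q_{0}^{\T}A(G)Q_{0}$: if instead $\rank_{\mathbb{F}_{p}}W=n-1$, then $\operatorname{colsp}\overline{W}$ — the cyclic subspace generated by $\overline{e}$ under $A(G)\bmod p$ — is a hyperplane invariant under the symmetric matrix $A(G)\bmod p$, so its one-dimensional orthogonal complement $\langle u\rangle$ is spanned by an eigenvector $u$ of $A(G)\bmod p$ that is moreover isotropic and orthogonal to $\overline{e}$; substituting this back into the divided-by-$p$ orthogonality relations should yield a contradiction. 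Pinning down exactly which integral relations survive the division by $p$, and why the constraints they impose on $u$ are incompatible, is the step I expect to be the real obstacle; the rest is bookkeeping with Smith normal forms and the trivial structure of integral orthogonal matrices.
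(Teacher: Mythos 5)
A preliminary remark: this paper does not prove Theorem \ref{osf} itself --- it is imported from \cite{wang2017JCTB} --- so the fair comparison is with the proof architecture that the paper does quote, namely Lemma \ref{pl}, Lemma \ref{tl} and Theorem \ref{maxrank}. Your skeleton matches that architecture exactly: reduce to showing $\ell(Q)=1$ for every $Q\in\mathcal{Q}(G)$, via (i) $\ell$ is odd (this is Lemma \ref{tl}) and (ii) any odd prime $p\mid\ell$ forces $p^2\mid\det W$ (the contrapositive of Lemma \ref{pl}, combined with the observation that $\rank_{\mathbb{F}_p}W\le n-2$ also gives $p^2\mid\det W$). Your endgame ($\ell=1$ plus $Qe=e$ forces $Q$ to be a permutation matrix), your derivation of $\rank_{\mathbb{F}_2}W\le\lceil n/2\rceil$ from $e^{\T}A^{2k}e\equiv e^{\T}A^{k}e\pmod 2$, and your easy half of (ii) (the case $\rank_{\mathbb{F}_p}\overline{Q_0}\ge 2$) are all correct.

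The genuine gaps are precisely the two steps you flag as obstacles, and they are not small. For (ii), the case $\rank_{\mathbb{F}_p}\overline{Q_0}=1$ with $\rank_{\mathbb{F}_p}W=n-1$ is the entire content of Lemma \ref{pl}; ``descend one $p$-adic layer and a contradiction should appear'' is not yet an argument. The actual proof (in \cite{wang2013ElJC}) has to exploit that $\overline{Q_0}=uv^{\T}$ where $u$ spans the one-dimensional kernel of $W^{\T}\bmod p$, combine $u^{\T}u\equiv 0$, $u^{\T}e\equiv 0$ and the invariance of $\langle u\rangle$ under $A\bmod p$ with an integrality/lifting argument modulo $p^2$, and the condition $p\,\|\det W$ enters in an essential way there --- your sketch never uses it in the $r=1$ branch, which is a sign the contradiction will not fall out of bookkeeping alone. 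For (i), the oddness of $\ell$ is the main theorem of \cite{wang2017JCTB} and occupies most of that paper; the isotropic-subspace dimension count you propose against the bound $\rank_{\mathbb{F}_2}W=\lceil n/2\rceil$ is the right opening move but by itself does not close the case $2\mid\ell$ (one has to work modulo $4$, not just modulo $2$). So: correct strategy, correct reductions, but the two lemmas that carry all the weight are asserted rather than proved.
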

The condition of Theorem \ref{osf} is the best possible in the sense that if $\det W$ has a multiple odd prime factor then $G$ may not be DGS. A small counterexample can be found in \cite{wang2013ElJC}.  The general idea hidden in that counterexample was revealed by the following theorem. For an integral matrix $M$ and a prime $p$, we use $\rank_p M$ to denote the rank of $M$  over the finite filed $\mathbb{F}_p=\mathbb{Z}/p\mathbb{Z}$.
\begin{theorem}\cite{wang2019LAA}\label{GGM}
	Let $p$ be an odd prime. Suppose that $\det W\neq 0$ and the following conditions hold:\\
\noindent (\rmnum{1}) $p^2\mid \det W$;\\
\noindent (\rmnum{2}) $\rank_p W=n-1$;\\
\noindent (\rmnum{3}) $W^\T z\equiv 0\pmod{p}$ has a solution (permuting the entries, or equivalently reordering the vertices, if necessary) of the form
	$$(\underbrace{-1,-1,\ldots,-1}_{p},\underbrace{1,1,\ldots,1}_{p},0,0,\ldots,0).$$
Then $G$ is not DGS. Furthermore $G$ has a generalized cospectral mate whose adjacency matrix is similar to $A$ via a rational orthogonal matrix
\begin{equation}
Q=\left(\begin{matrix}
\frac{1}{p}\left(\begin{matrix}pI_p-J&J\\J&pI_p-J\end{matrix}\right)&&\\&&I_{n-2p}
\end{matrix}\right),
\end{equation}  where $I$ and $J$ are the identity matrix and the all-one matrix, respectively.
\end{theorem}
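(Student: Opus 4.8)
\medskip
\noindent\textbf{Proof idea.}
The plan is to exhibit the generalized cospectral mate explicitly as the graph $H$ with $A(H)=B:=Q^{\T}AQ$, and then to verify the three things this requires: that $Q$ is a rational orthogonal matrix fixing $e$ (so that $B$ is generalized cospectral with $A$ once $B$ is a genuine adjacency matrix), that $B$ has integer entries, and that $H\not\cong G$. Throughout I would use the classical characterization (see \cite{wang2013ElJC}) that two graphs are generalized cospectral if and only if their adjacency matrices are similar through a rational orthogonal matrix $Q$ with $Qe=e$, together with the fact --- the ``amazing property'' mentioned in the abstract --- that any symmetric integral matrix which is generalized cospectral with the adjacency matrix of a graph is itself the adjacency matrix of a graph.

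First I would rewrite $Q$. Let $z=(-1,\dots,-1,1,\dots,1,0,\dots,0)^{\T}$ be the vector furnished by condition (\rmnum{3}), so that $z^{\T}z=2p$ and $z^{\T}e=0$. A block computation shows that the $Q$ in the statement is precisely the Householder reflection $Q=I_n-\tfrac1p zz^{\T}$. Hence $Q^{\T}=Q$ and $Q^{2}=I_n-\tfrac2p zz^{\T}+\tfrac1{p^{2}}(z^{\T}z)zz^{\T}=I_n$, so $Q$ is rational orthogonal, and $Qe=e-\tfrac1p z(z^{\T}e)=e$. In particular $Q^{\T}JQ=J$, whence $J-I_n-B=Q^{\T}(J-I_n-A)Q$; thus $\sigma(B)=\sigma(A)$ and $\sigma(J-I_n-B)=\sigma(J-I_n-A)$, i.e.\ $B$ is generalized cospectral with $A$ as soon as we know that $B$ is a symmetric $0$--$1$ matrix with zero diagonal.

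The heart of the argument is integrality of $B$. Expanding,
\begin{equation*}
B=A-\tfrac1p\,z(z^{\T}A)-\tfrac1p(Az)z^{\T}+\tfrac1{p^{2}}(z^{\T}Az)\,zz^{\T}.
\end{equation*}
From $W^{\T}z\equiv0\pmod p$ one gets $z^{\T}A^{k}e\equiv0\pmod p$ for $0\le k\le n-1$, and the Cayley--Hamilton theorem extends this to every $k\ge0$; consequently $W^{\T}(Az)\equiv0\pmod p$ too, and since $\rank_p W=n-1$ the kernel of $W^{\T}$ over $\mathbb{F}_p$ is the line spanned by $z$. Hence $Az\equiv\lambda z\pmod p$ for some $\lambda\in\mathbb{F}_p$; writing $Az=\lambda z+p\,r$ with $r$ integral (so $z^{\T}A=\lambda z^{\T}+p\,r^{\T}$ and $z^{\T}Az=p(2\lambda+r^{\T}z)$), the displayed identity collapses to
\begin{equation*}
B=A-zr^{\T}-rz^{\T}+\tfrac{r^{\T}z}{p}\,zz^{\T}.
\end{equation*}
Since some entry of $zz^{\T}$ equals $1$, this shows that $B$ is integral \emph{iff} $p\mid r^{\T}z$. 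Partitioning the vertices into the two $p$-sets carrying $-1$ and $+1$ and the remaining $n-2p$ vertices and reading off the coordinates of $r$, the quantity $r^{\T}z$ becomes an explicit combination of edge-counts within and between the two $p$-sets; establishing $p\mid r^{\T}z$ is exactly the point at which conditions (\rmnum{1})--(\rmnum{3}) must be used in concert. Here condition (\rmnum{3}) is what forces the two $p$-sets to be balanced of size $p$, while (\rmnum{1}) and (\rmnum{2}) --- via the Smith normal form of $W$, whose unique $p$-divisible invariant factor is forced by (\rmnum{1}) to be divisible by $p^{2}$, so that $z$ lifts to an integral $\widehat z\equiv z\pmod p$ with $W^{\T}\widehat z\equiv0\pmod{p^{2}}$ --- would supply the congruence modulo $p^{2}$ that pins $r^{\T}z$ down. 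I expect this $p^{2}$-congruence to be the main obstacle; granting it, $B$ is integral, and being symmetric and generalized cospectral with $A$, it is by the ``amazing property'' the adjacency matrix of a graph $H$ generalized cospectral with $G$.

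Finally I would show $H\not\cong G$. If $A(H)=P^{\T}AP$ for some permutation matrix $P$, then $R:=QP^{\T}$ satisfies $RA=AR$ and $Re=e$, hence $R$ fixes each of $e,Ae,\dots,A^{n-1}e$; these span $\mathbb{Q}^{n}$ because $\det W\neq0$, so $R=I_n$ and $Q=P$. But $Q=I_n-\tfrac1p zz^{\T}$ has non-integral entries (as $p\ge3$), so it is not a permutation matrix --- a contradiction. Therefore $G$ is not DGS, with the asserted generalized cospectral mate $H$ whose adjacency matrix is $Q^{\T}AQ$.
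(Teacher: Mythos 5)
The paper never proves this theorem itself: it is imported from \cite{wang2019LAA}, and inside the present paper its content is exactly the conjunction of Lemma \ref{mustint} (integrality of $Q^\T AQ$) and Lemma \ref{simcld} (the $0$--$1$ property), applied to the matrix $Q=I_n-\frac1p zz^\T$. Your skeleton reproduces that decomposition correctly: the identification of $Q$ as the reflection $I_n-\frac1p zz^\T$ (using $z^\T z=2p$, $z^\T e=0$), the verification that $Q$ is regular rational orthogonal, the reduction of integrality of $B=Q^\T AQ$ to the single congruence $p\mid r^\T z$ where $Az=\lambda z+pr$ (your derivation of $Az\equiv\lambda z\pmod p$ from Cayley--Hamilton and $\rank_p W=n-1$, and the collapsed formula for $B$, are both right), the appeal to the $0$--$1$ property, and the non-isomorphism argument via controllability are all sound.

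The genuine gap is that you never prove $p\mid r^\T z$, and that congruence \emph{is} the theorem: the paper itself stresses that ``the key point of Theorem \ref{GGM} is that $Q^\T AQ$ must be a symmetric $(0,1)$-matrix.'' You correctly name the tool --- conditions (\rmnum{1}) and (\rmnum{2}) force the unique $p$-divisible invariant factor of $W$ to be divisible by $p^2$, so there is a solution $\hat z\equiv c^{-1}z\pmod p$ with $W^\T\hat z\equiv 0\pmod{p^2}$ --- but then write ``granting it.'' The missing computation runs roughly as follows: from $W^\T\hat z\equiv 0\pmod{p^2}$ and Cayley--Hamilton one gets $W^\T(A\hat z)\equiv 0\pmod{p^2}$, so writing $A\hat z=\lambda\hat z+ps$ forces $W^\T s\equiv 0\pmod p$ and hence $s\equiv\mu\hat z\pmod p$; since $z^\T z=2p$ and $z\equiv c\hat z\pmod p$ with $c\not\equiv 0$, also $\hat z^\T\hat z\equiv 0\pmod p$, whence $\hat z^\T A\hat z-\lambda\hat z^\T\hat z=p\,\hat z^\T s\equiv p\mu\,\hat z^\T\hat z\equiv 0\pmod{p^2}$; finally, transporting this along $z=c\hat z+pw$ and using $Az\equiv\lambda z\pmod p$ gives $p\,z^\T r=z^\T Az-\lambda z^\T z\equiv 0\pmod{p^2}$, i.e.\ $p\mid r^\T z$. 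Note also that your informal suggestion to establish $p\mid r^\T z$ by ``reading off edge-counts between the two $p$-sets'' cannot work on its own: the needed statement is a congruence modulo $p^2$, not modulo $p$, and it genuinely requires the Smith-normal-form lift rather than only the combinatorial shape of $z$. Without some such argument the proof is incomplete at its crux.
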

The key point of Theorem \ref{GGM} is that, under the stated assumptions,  the matrix $Q^\T A Q$ must be a symmetric $(0,1)$-matrix with vanishing diagonal, that is, an adjacency matrix. We remark that the established pair of generalized cospectral mates also has a clear meaning from the viewpoint of edge switchings. This kind of switching is referred to as generalized GM-switching, which, as an analogue to the original GM-switching method introduced in \cite{godsil1982},  can be used to construct some new pairs of generalized cospectral mates; see \cite{ihringer2019LAA, ihringer2021DM} for some recent application of the generalized GM-switching in constructing cospectral strongly regular graphs.

The main weakness of the above theorem is the third condition. The required solution seems so special that it can rarely be satisfied. A natural question is that whether there exist some other kinds of solutions to  guarantee the existence of a generalized cospectral mate for $G$. What is the exact relationship between the DGS-property  of $G$ and the solutions to $W^\T z\equiv 0\pmod{p}$?

In this paper, we shall introduce a large family of graphs closely related to the first two conditions of Theorem \ref{GGM}.  The main discovery is that for this family of graphs,  the DGS-property of a graph can be completely determined from any nontrivial solution to the equation $W^\T z\equiv 0\pmod{p}$. To give the definition, we first recall some basic fact on Smith normal form of an integral matrix.

Two $n\times n$ integral matrices $M_1$ and $M_2$ are \emph{integrally equivalent} if $M_2$ can be
obtained from $M_1$ by a sequence of the following operations: row permutation, row negation, addition of an integer multiple of one row to another and the corresponding column operations. Any integral invertible matrix $M$ is integrally equivalent to a diagonal matrix $\diag[d_1, d_2,..., d_n]$, known as the \emph{Smith normal form} of $M$, in which  $d_1,d_2,\ldots,d_r$ are positive integers with $d_i\mid d_{i+1}$ for $i = 1,...,n-1$. We are mainly interested in the Smith normal form of an invertible walk matrix. A particular interesting example is the walk matrix for graphs satisfying the condition of Theorem \ref{osf}.

\begin{theorem}\cite{wang2017JCTB}\label{maxrank}
If
	$2^{-\lfloor\frac{n}{2}\rfloor}\det W= b$ for some odd and square-free integer $b$, then  the Smith normal form of $W$ is
	$$\diag[\underbrace{1,1,\ldots,1}_{\lceil\frac{n}{2}\rceil},\underbrace{2,2,\ldots,2,2b}_{\lfloor\frac{n}{2}\rfloor}].$$
\end{theorem}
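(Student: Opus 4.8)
The plan is to compute the invariant factors $d_1\mid d_2\mid\cdots\mid d_n$ of $W$ prime by prime, using that $\det W=\pm\,2^{\lfloor n/2\rfloor}b$ and the divisibility chain. For an odd prime $p$: if $p\nmid b$ then $p\nmid\det W$, so $p$ divides no invariant factor; if $p\mid b$ then $b$ square-free gives $v_p(\det W)=1$, so exactly one $d_i$ is divisible by $p$, and by the divisibility chain it is $d_n$. Hence the odd part of $b$ sits entirely inside $d_n$, and $d_1,\ldots,d_{n-1}$ are powers of $2$. Everything now reduces to pinning down the $2$-adic valuations $v_2(d_i)$.

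The crucial ingredient is the bound $\rank_2 W\le\lceil n/2\rceil$, valid for the walk matrix of an arbitrary $n$-vertex graph (this is a standard fact about walk matrices; in particular it yields the integrality of $2^{-\lfloor n/2\rfloor}\det W$ noted in Theorem~\ref{osf}, and we sketch it here for completeness). Working over $\mathbb{F}_2$, one first checks $e^\T A^k e\equiv 0\pmod 2$ for every $k\ge 1$: for odd $k=2l+1$ this is $(A^le)^\T A(A^le)=0$ since $A$ is symmetric with zero diagonal, and for even $k=2l$ one has $e^\T A^{2l}e=(A^le)^\T(A^le)\equiv e^\T A^le$, so the claim follows by repeatedly halving the exponent. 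Let $\mathcal{C}\subseteq\mathbb{F}_2^n$ be the column space of $W$ over $\mathbb{F}_2$, which is the $A$-cyclic subspace generated by $e$, and put $d=\dim\mathcal{C}=\rank_2 W$; then $e,Ae,\ldots,A^{d-1}e$ form a basis of $\mathcal{C}$, and by the computation above the Gram matrix of the standard bilinear form in this basis is $\diag(n,0,\ldots,0)$ over $\mathbb{F}_2$. Consequently the radical $R=\mathcal{C}\cap\mathcal{C}^\perp$ of this form has dimension $d$ when $n$ is even and $d-1$ when $n$ is odd; since $R$ is a totally isotropic subspace of $\mathbb{F}_2^n$ for the (nondegenerate) standard form, $\dim R\le\lfloor n/2\rfloor$, and in either parity this forces $d\le\lceil n/2\rceil$.

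With this bound the $2$-part is determined. The number of even invariant factors is $n-\rank_2 W\ge\lfloor n/2\rfloor$, whereas $\sum_i v_2(d_i)=v_2(\det W)=\lfloor n/2\rfloor$ because $2^{-\lfloor n/2\rfloor}\det W=b$ is odd. Each even $d_i$ contributes at least $1$ to this sum, so there are exactly $\lfloor n/2\rfloor$ even invariant factors, each with $v_2=1$; in particular $\rank_2 W=\lceil n/2\rceil$. By the divisibility chain the even invariant factors form the terminal segment $d_{\lceil n/2\rceil+1},\ldots,d_n$, so $d_1=\cdots=d_{\lceil n/2\rceil}$ are odd; since these carry no odd prime (all of $b$ lives in $d_n$), they equal $1$. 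The remaining factors are even with $2$-adic valuation exactly $1$, hence equal to $2$, except $d_n$, which in addition carries the odd part $b$; thus $d_n=2b$. This is exactly $\diag[\,\underbrace{1,\ldots,1}_{\lceil n/2\rceil},\underbrace{2,\ldots,2,2b}_{\lfloor n/2\rfloor}\,]$, as claimed.

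I expect the only genuinely nontrivial step to be the structural lemma $\rank_2 W\le\lceil n/2\rceil$, and within it the two parity cases of the totally isotropic bound; the remainder is bookkeeping with invariant factors and $p$-adic valuations and should be routine. If one prefers, the lemma may simply be quoted from Wang's earlier work, where it underlies the integrality of $2^{-\lfloor n/2\rfloor}\det W$.
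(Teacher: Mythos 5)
Your proof is correct. Note that the paper itself offers no proof of Theorem~\ref{maxrank}: it is quoted from Wang's 2017 JCTB paper, so there is nothing internal to compare against. Your argument is essentially the standard one from that source: all the content is in the rank bound $\rank_2 W\le\lceil n/2\rceil$, after which the conclusion follows by valuation bookkeeping (odd primes of the square-free $b$ each land once in $d_n$ by the divisibility chain; the count of even invariant factors is squeezed between $n-\rank_2 W\ge\lfloor n/2\rfloor$ and $v_2(\det W)=\lfloor n/2\rfloor$). Your derivation of the rank bound is also sound, though phrased differently from the usual one: you observe that $e^\T A^k e\equiv 0\pmod 2$ for $k\ge 1$ and then bound the radical of the restricted form by the maximal dimension $\lfloor n/2\rfloor$ of a totally isotropic subspace of $\mathbb{F}_2^n$; the customary route applies Sylvester's rank inequality to $W^\T W$, whose entries are exactly the quantities $e^\T A^{i+j}e$ you computed, so $\rank_2(W^\T W)\le 1$ gives $2\rank_2 W-n\le 1$. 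The two are equivalent in substance, and your geometric version cleanly separates the two parity cases. No gaps.
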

Now we introduce a  family of graphs using  smith normal forms of walk matrices.
\begin{definition}\label{deff}\normalfont
For a positive integer $n$, we use	$\mathcal{F}_n$  to denote the family of all graphs $G$ of order $n$ such that the Smith normal form of $W(G)$ is 	$$\diag[\underbrace{1,1,\ldots,1}_{\lceil\frac{n}{2}\rceil},\underbrace{2,2,\ldots,2,2p^2b}_{\lfloor\frac{n}{2}\rfloor}],$$
where $b$ is odd and square-free, $p$ is an odd prime and $p\nmid b$.
\end{definition}
\begin{remark}\normalfont
The unique odd prime $p$ satisfying $p^2\mid \det W$ is a crucial parameter for a graph $G\in \mathcal{F}_n$. We shall use the notation $  \mathcal{F}_n^p=\{G\in \mathcal{F}_n\colon\, p^2\mid \det W\}$.
\end{remark}
We note that graphs in $\mathcal{F}_n$ can also be equivalently defined as graphs satisfying (\rmnum{1}) $2^{-\lfloor n/2 \rfloor}\det W =p^2b$ and  (\rmnum{2}) $\rank_p W=n-1$ simultaneously, with the same assumptions on $b$ and $p$ as in Definition \ref{deff}. In particular, every graph in $\mathcal{F}_n$  clearly satisfies the first two condition of Theorem \ref{GGM}. Compared the Smith normal form in Definition \ref{deff} with that in Theorem \ref{maxrank}, the only difference is the last invariant factor. For graphs in $\mathcal{F}_n$, the last invariant contains exactly one square factor. Intuitively, since a graph in $\mathcal{F}_n$ \emph{almost} satisfies the condition of Theorem \ref{osf}, it may be \emph{almost} determined by its generalized spectrum. Indeed, we shall prove the following theorem.
\begin{theorem}\label{atmost1}
	Every graph in $\mathcal{F}_n$ has at most one generalized cospectral mate.
\end{theorem}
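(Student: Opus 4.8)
The plan is to recast the statement as a question about the rational orthogonal matrices attached to $G$ and then to control their denominators $p$-adically by means of the Smith normal form of $W$. Since $\det W\neq0$, every graph $H$ generalized cospectral with $G$ satisfies $A(H)=Q^{\T}A(G)Q$ for a \emph{unique} rational orthogonal matrix $Q$ with $Qe=e$; set
\[
\mathcal Q(G)=\{\,Q\ \text{rational orthogonal}:\ Qe=e,\ Q^{\T}A(G)Q\ \text{integral}\,\}.
\]
Because $Qe=e$ implies $Q^{\T}JQ=J$, the matrix $Q^{\T}A(G)Q$ is always symmetric and generalized cospectral with $A(G)$, so by the ``amazing property'' quoted in the introduction it is automatically an adjacency matrix; hence $\mathcal Q(G)$ parametrises exactly the graphs generalized cospectral with $G$. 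Writing $\mathcal S_n$ for the group of $n\times n$ permutation matrices, one has $\mathcal Q(G)\mathcal S_n=\mathcal Q(G)$, and by uniqueness of $Q$ two elements of $\mathcal Q(G)$ produce isomorphic graphs precisely when they lie in the same right coset of $\mathcal S_n$, the coset $\mathcal S_n$ itself corresponding to the class of $G$. So the theorem is equivalent to the assertion that \emph{every element of $\mathcal Q(G)$ that is not a permutation matrix lies in a single right coset of $\mathcal S_n$.}

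Next I would bound the level $\ell(Q)$, the least positive integer with $\ell(Q)Q$ integral. From $W(H)=Q^{\T}W$ one sees $\ell(Q)$ divides the last invariant factor $2p^2b$ of $W$; refining this with the methods already in the literature --- $\ell(Q)$ is odd (as in the proof of Theorem~\ref{osf}, since $2^{-\lfloor n/2\rfloor}\det W=p^2b$ is odd), no prime divisor of the square-free number $b$ can divide $\ell(Q)$ (an odd prime $q\mid\ell(Q)$ forces $q^2\mid\det W$), and $p^2\nmid\ell(Q)$ --- gives $\ell(Q)\in\{1,p\}$. Thus a non-permutation element of $\mathcal Q(G)$ has level exactly $p$. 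Fix such a $Q$ and put $N=pQ$. Then $W^{\T}N=p\,W(H)^{\T}\equiv0\pmod p$, so, as $\rank_pW=n-1$, each column of $N$ lies in the one-dimensional space $\ker_pW^{\T}=\langle z\rangle$; hence $N\equiv zw^{\T}\pmod p$ for some $w\not\equiv0$. Since $N^{\T}N=p^2I\equiv0\pmod p$ we get $(z^{\T}z)\,ww^{\T}\equiv0\pmod p$, and because some diagonal entry $w_i^2$ is nonzero this forces $z^{\T}z\equiv0\pmod p$. In particular, if $z^{\T}z\not\equiv0\pmod p$ then $\mathcal Q(G)$ contains no level-$p$ matrix and $G$ is DGS; so we may assume $z^{\T}z\equiv0\pmod p$.

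It remains to show the level-$p$ elements of $\mathcal Q(G)$ form a single right coset of $\mathcal S_n$. Let $Q_1,Q_2\in\mathcal Q(G)$ have level $p$ and write $N_i=pQ_i\equiv zw_i^{\T}\pmod p$ (the same $z$). Using $z^{\T}z\equiv0\pmod p$,
\[
N_1^{\T}N_2\equiv(z^{\T}z)\,w_1w_2^{\T}\equiv0\pmod p,
\]
so $p\,Q_1^{\T}Q_2=\tfrac1p N_1^{\T}N_2$ is integral, i.e.\ $\ell(Q_1^{\T}Q_2)\mid p$; since an integral orthogonal matrix fixing $e$ is a permutation matrix, it suffices to exclude $\ell(Q_1^{\T}Q_2)=p$. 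This is the heart of the proof, and I expect it to be the main obstacle. I would attack it by a second-order expansion: write $N_i=zw_i^{\T}+pM_i$ with $M_i$ integral and compute $N_1^{\T}N_2$ modulo $p^2$, exploiting (a) the orthogonality relations $N_i^{\T}N_i=N_iN_i^{\T}=p^2I$; (b) the walk relations $W^{\T}N_i=p\,W(H_i)^{\T}$, which --- since $z^{\T}z\equiv0\pmod p$ and $\rank_pW=n-1$ --- determine $z^{\T}M_i$ and $M_i^{\T}z$ modulo $p$; and (c) the integrality of $A(H_i)=p^{-2}N_i^{\T}A(G)N_i$ (again using the amazing property, together with the fact, to be checked, that each $H_i$ still belongs to $\mathcal F_n^p$, so that $W(H_i)^{\T}$ has one-dimensional kernel mod $p$). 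These should pin $N_1^{\T}N_2$ down modulo $p^2$ and show it is $\equiv0\pmod{p^2}$, forcing $Q_1^{\T}Q_2\in\mathcal S_n$. The delicate point is squeezing the extra factor of $p$ out of the single self-orthogonality relation $z^{\T}z\equiv0\pmod p$ against the orthogonality and integrality constraints; everything else is routine once the framework of the first two steps is set up. Granting this, all level-$p$ matrices in $\mathcal Q(G)$ lie in one right coset of $\mathcal S_n$, and the theorem follows.
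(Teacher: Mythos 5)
Your setup — reducing the theorem to the claim that all non-permutation matrices in $\mathcal{Q}(G)$ lie in a single right coset of the permutation group, pinning the level down to $\ell(Q)\in\{1,p\}$ via the Smith normal form, and observing that $pQ$ is rank one mod $p$ with columns in the one-dimensional kernel of $W^\T$ mod $p$ — matches the paper's route (Corollary \ref{lp}, Lemma \ref{rk1}, Proposition \ref{gcmp}). But the step you yourself identify as ``the heart of the proof'' is left as a plan rather than an argument, and it is precisely the content of the paper's key Lemma \ref{ort} and Theorem \ref{uQ}. Your mod-$p$ computation $N_1^\T N_2\equiv (z^\T z)w_1w_2^\T\equiv 0\pmod p$ only shows $\ell(Q_1^\T Q_2)\mid p$; the proposed ``second-order expansion'' to upgrade this to a congruence mod $p^2$ is not carried out, and it is not clear it can be made to work in the global matrix form you suggest, since the relation $z^\T z\equiv 0\pmod p$ by itself carries no mod-$p^2$ information.

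The paper closes this gap column by column rather than globally. For two columns $u=\alpha_i$ of $pQ_1$ and $v=\beta_j$ of $pQ_2$ with $u\neq v$ (hence $u\neq\pm v$ since $e^\T u=e^\T v=p$), linear dependence mod $p$ means $au+bv=p\gamma$ for an integral $\gamma$ with $a,b\not\equiv 0$, so $(au+bv)^\T(au+bv)=p^2\gamma^\T\gamma\equiv 0\pmod{p^2}$; combined with $u^\T u=v^\T v=p^2$ this yields $u^\T v\equiv 0\pmod{p^2}$, and the strict Cauchy--Schwarz inequality $|u^\T v|<p^2$ then forces $u^\T v=0$ exactly. Since the columns of $pQ_1$ form a basis of $\mathbb{R}^n$, no column of $pQ_2$ can be orthogonal to all of them, so every column of $pQ_2$ literally equals a column of $pQ_1$, i.e.\ $Q_2=Q_1P$. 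The two ingredients you are missing are (a) squaring the mod-$p$ dependence of a \emph{pair of columns} to get a mod-$p^2$ congruence on their inner product, and (b) converting that congruence into an exact equality via the size bound from Cauchy--Schwarz. You would also need the paper's separate treatment (Lemma \ref{qdiag} and the second half of the proof of Theorem \ref{uQ}) of the case where $z$ has entries divisible by $p$, since there the columns of $pQ$ corresponding to integral entries are standard unit vectors scaled by $p$ and the ``all entries nonzero mod $p$'' hypothesis fails; the paper reduces to a principal submatrix before applying the orthogonality lemma.
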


We shall prove Theorem \ref{atmost1} in Section 2.2. We relate any possible generalized cospectral mates of $G\in \mathcal{F}_n^p$ to a particular kind of orthogonal matrices, which we call \emph{primitive} matrices. We show that for a fixed graph $G\in \mathcal{F}_n^p$ not being DGS, all possible  primitive matrices related to  $G$ are  unique up to column permutations. In Section 2.3, we further establish the equivalence between the existence of a generalized cospectral mate for a graph and the existence of a primitive matrix.

In order to give a complete criterion to distinguish two different kinds (DGS v.s. non-DGS) of graphs in $\mathcal{F}_n$, in Section 3 we develop a procedure to generate all possible primitive matrices from a given vector. When it succeeds, it finds a generalized cospectral mate; when it fails, it indicates that the given graph is DGS. Using the proposed algorithm, we conduct a numerical experiment on graphs with at most 20 vertices,  which suggests that, for not too small $n$, while $\mathcal{F}_n$ may have a stable positive density (nearly 3\%), almost none of $\mathcal{F}_n$ has a generalized cospectral mate. This gives some evidences for Haemers' conjecture that almost all graphs are determined by their spectra.

\section{Existence and uniqueness of generalized cospectral mates}
\subsection{ Preliminaries}
An orthogonal matrix $Q$ is called \emph{regular} if $Qe=e$ (or equivalently, $Q^\T e=e$). An old result of Johnson and Newman \cite{johnson1980JCTB} states that two graphs $G$ and $H$ are generalized cospectral if and only if there exists a regular orthogonal matrix $Q$ such that $Q^\T A(G)Q=A(H)$. A graph $G$ is \emph{controllable} if $W(G)$ is invertible. For controllable graphs, the corresponding matrix $Q$ is unique and rational.
\begin{lemma}\cite{johnson1980JCTB,wang2006EuJC}
	Let $G$ be a controllable graphs of order $n$ and $H$ be a graph generalized cospectral with $G$. Then $H$ is controllable and there exists a unique regular rational orthogonal matrix $Q$ such that $Q^\T A(G) Q=A(H)$. Moreover, the unique $Q$ satisfies $Q=W(G)W^{-1}(H)$ and hence is rational.
\end{lemma}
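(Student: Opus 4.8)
The plan is to leverage the Johnson--Newman characterization stated just above together with the controllability hypothesis to pin down $Q$ explicitly through the walk matrices. Writing $A = A(G)$ and $B = A(H)$, the Johnson--Newman theorem supplies at least one regular orthogonal matrix $Q$ with $Q^\T A Q = B$; since $Q$ is orthogonal this is equivalent to the intertwining relation $AQ = QB$ (multiply on the left by $Q$ and use $QQ^\T = I$). The whole argument will rest on transporting this relation onto the walk matrices.

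First I would establish the identity $A^k e = Q B^k e$ for every $k \geq 0$. Iterating $AQ = QB$ gives $A^k Q = Q B^k$ by a one-line induction, and then the regularity condition $Qe = e$ yields
$$A^k e = A^k(Qe) = (A^k Q)e = Q B^k e.$$
Reading this column by column shows precisely that $W(G) = Q\,W(H)$, the key structural identity of the proof.

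From $W(G) = Q\,W(H)$ the three assertions follow in turn. Since $G$ is controllable, $W(G)$ is invertible, and $Q$ is invertible (being orthogonal); hence $W(H) = Q^{-1} W(G)$ is invertible, so $H$ is controllable as well. Rearranging gives the promised formula $Q = W(G)\,W(H)^{-1}$. Because $W(G)$ and $W(H)$ are integral and $W(H)$ is invertible over $\mathbb{Q}$, the inverse $W(H)^{-1}$ is rational, and therefore $Q$ is a rational matrix. For uniqueness, I would observe that the derivation above used nothing about $Q$ beyond regularity, orthogonality, and $Q^\T A Q = B$; thus every regular orthogonal $Q$ effecting $Q^\T A Q = B$ must satisfy $W(G) = Q\,W(H)$, forcing $Q = W(G)\,W(H)^{-1}$. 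Since the right-hand side does not depend on the choice of $Q$, such a $Q$ is unique.

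There is no serious obstacle here: the content is simply the observation that the regularity condition $Qe = e$ couples with the similarity $AQ = QB$ to intertwine the two walk matrices. The only points requiring a little care are verifying that orthogonality genuinely converts $Q^\T A Q = B$ into $AQ = QB$, and noting that controllability of $G$ is exactly what makes $W(G)^{-1}$---and hence the explicit formula together with the rationality and uniqueness conclusions---available.
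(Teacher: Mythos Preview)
Your argument is correct and is precisely the standard derivation: from $Q^\T A Q = B$ and orthogonality you pass to the intertwining relation $AQ = QB$, iterate to $A^kQ = QB^k$, apply regularity $Qe = e$ to get $W(G) = Q\,W(H)$, and then read off controllability of $H$, the explicit formula $Q = W(G)W(H)^{-1}$, rationality, and uniqueness. The paper itself does not supply a proof of this lemma; it merely cites it from \cite{johnson1980JCTB,wang2006EuJC}, so there is no in-paper argument to compare against, but your reasoning matches the intended one from those references.
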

For a rational matrix $Q$, the \emph{level} of $Q$, denoted by $\ell(Q)$, or simply $\ell$, is the smallest positive integer $k$ such that $kQ$ is an integral matrix.
For a controllable graph $G$, define $\mathcal{Q}(G)$ to be the set of all regular rational orthogonal matrices  $Q$ such that $Q^\T A(G)Q$ is an adjacency matrix.  It is easy to show that for any $Q\in \mathcal{Q}(G)$, the level $\ell(Q)$ must be a factor of $d_n$, the last invariant factor of $W(G)$.  It turns out that under some mild  assumptions on $W$, some factors of $d_n$ can never be realized as  levels for any $Q\in \mathcal{Q}(G)$.  For nonzero integers $n$, $m$ and positive integer $k$, we use $m^k\mid\mid n$ to indicate that $m^k$ precisely divides $n$, i.e., $m^k\mid n$ but $m^{k+1}\nmid n$.
\begin{lemma}\cite{wang2013ElJC}\label{pl}
Let $Q\in \mathcal{Q}(G)$ with level $\ell$, and $p$ be an odd prime. Suppose that $\rank_p W=n-1$ and  $p\mid\mid \det W$ (or equivalently, $p\mid\mid d_n$). Then $p\nmid\ell$ and hence $\ell\mid\frac{d_n}{p}$.
\end{lemma}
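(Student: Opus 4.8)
The plan is to argue by contradiction: assume $p\mid\ell$ and produce a contradiction. Since $\ell$ is a factor of $d_n$ and we are given $p\mid\mid d_n$, the exponent of $p$ in $\ell$ is exactly one, so write $\ell=ps$ with $p\nmid s$. Put $M=\ell Q$, an integral matrix; by the minimality defining the level, $\tfrac1p M=sQ$ is not integral, so the reduction $\bar M:=M\bmod p$ is a nonzero matrix over $\mathbb{F}_p$. The first goal is to determine the shape of $\bar M$.

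First I would exploit the walk-matrix intertwining. From $Q^\T A Q=A(H)$ together with $Q^\T e=e$ one gets $Q^\T A^k e=A(H)^k e$ for all $k$, hence $Q^\T W=W(H)$ and therefore $M^\T W=\ell\,W(H)$. Because $p\mid\ell$ and $W(H)$ is integral this yields $\bar M^\T\bar W=0$ over $\mathbb{F}_p$, i.e.\ $\bar W^\T\bar M=0$. As $\rank_p W=n-1$, the left null space of $\bar W$ is one-dimensional; let $\bar z$ span it. Then every column of $\bar M$ is a scalar multiple of $\bar z$, so $\bar M=\bar z\,w^\T$ for some $w\in\mathbb{F}_p^n$ and $\bar M$ has rank exactly one. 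Substituting this factorization into the four congruences $MM^\T\equiv 0$, $M^\T M\equiv 0$, $Me\equiv 0$ and $M^\T e\equiv 0\pmod p$ (all immediate from $MM^\T=M^\T M=\ell^2 I$ and $Me=M^\T e=\ell e$ since $p\mid\ell$) produces the isotropy package
$$\bar z^\T\bar z=0,\qquad w^\T w=0,\qquad \bar z^\T e=0,\qquad w^\T e=0\quad\text{over }\mathbb{F}_p,$$
whose essential content is that the null vector $\bar z$ is isotropic for the standard bilinear form. Next I would record what isotropy says about $W$ itself. The columns of $\bar W$ span precisely $\bar z^{\perp}$, and isotropy means $\bar z\in\bar z^{\perp}$; hence the standard form is degenerate on the column space of $\bar W$, giving $\rank_p(W^\T W)=n-2$. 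Equivalently the Gram matrix $S=W^\T W=W(H)^\T W(H)$ has $p$-adic elementary divisors $\diag[1,\ldots,1,p,p]$ rather than $\diag[1,\ldots,1,p^2]$. This is exactly the regime left open by the bare hypothesis $p\mid\mid\det W$, so the contradiction must come from a finer analysis modulo $p^2$ that also uses the oddness of $p$.

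Main step and expected obstacle. I would lift to $\mathbb{Z}$, writing $M=z\,w^\T+pN$ with $z$ a primitive integer lift of $\bar z$, and expand $MM^\T=\ell^2 I$ and $M^\T M=\ell^2 I$ modulo $p^2$. After cancelling the common factor $p$, the quadratic cross terms collapse---here $p$ odd is used so that $\tfrac12$ is available---to the congruences $Nw\equiv-\tfrac{\alpha}{2}z$ and $N^\T z\equiv-\tfrac{\gamma}{2}w\pmod p$, where $w^\T w=p\alpha$ and $z^\T z=p\gamma$; the regularity relations similarly give $Ne=se-\delta z$ and $N^\T e=se-\epsilon w$ with $w^\T e=p\delta$, $z^\T e=p\epsilon$. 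The crux is to combine these congruences with the sharp hypothesis $p\mid\mid\det W$ to force the primitive covector $z^\T W$ to vanish modulo $p^2$; completing $z$ to a $\mathbb{Z}_p$-basis then shows $p^2\mid\det W$, contradicting $p\mid\mid\det W$ and establishing $p\nmid\ell$. I expect this closing step to be the main difficulty: the mod-$p^2$ identities above are individually consistent with $p\mid\ell$, so the argument must also draw on the global orthogonality of $Q$ (or the Hankel/walk-count structure of $S$) beyond its local shadow. Once $p\nmid\ell$ is proved, the final assertion $\ell\mid d_n/p$ is immediate from $\ell\mid d_n$ and $p\mid\mid d_n$.
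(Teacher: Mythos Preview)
The paper does not contain its own proof of this lemma; it is quoted verbatim from \cite{wang2013ElJC} as a known result, so there is nothing in the present paper to compare your argument against.

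Assessing your proposal on its own merits: the setup and the first phase are correct. The contradiction hypothesis $p\mid\ell$, the integrality of $M=\ell Q$, the relation $M^\T W(G)=\ell\,W(H)$, and the conclusion that $\bar M$ has rank one with $\bar M=\bar z\,w^\T$ where $\bar z$ spans $\ker_p W^\T$ are all sound. The isotropy package $\bar z^\T\bar z=w^\T w=\bar z^\T e=w^\T e=0$ over $\mathbb{F}_p$ is derived correctly, and your observation that this forces $\rank_p(W^\T W)\le n-2$ is also right.

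However, as you yourself flag, the argument is incomplete at the decisive point. You propose to finish by showing $z^\T W\equiv 0\pmod{p^2}$ and hence $p^2\mid\det W$, but the mod-$p^2$ identities you extract from $MM^\T\equiv 0$ and $M^\T M\equiv 0\pmod{p^2}$ (namely $Nw\equiv -\tfrac{\alpha}{2}z$ and $N^\T z\equiv -\tfrac{\gamma}{2}w$) do not by themselves yield this; they are, as you say, individually consistent with $p\mid\ell$. So there is a genuine gap. One piece of structure you have not used is that $Q^\T AQ$ is an adjacency matrix, so $M^\T A^kM=\ell^2 A(H)^k$ is integral for every $k$, giving $\xi_i^\T A^k\xi_j\equiv 0\pmod{p^2}$ for all columns $\xi_i,\xi_j$ of $M$ and all $k\ge 0$; in particular the diagonal relations $\xi^\T A^k\xi\equiv 0\pmod{p^2}$ go beyond the purely orthogonal-matrix information you have exploited so far. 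The proof in \cite{wang2013ElJC} makes essential use of structure of this kind, and without bringing in something beyond the bare congruences from $MM^\T$ and $Me$, your outline cannot be closed as written.
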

\begin{lemma}\cite{wang2017JCTB}\label{tl}
Let $Q\in \mathcal{Q}(G)$ with level $\ell$. Suppose that $\rank_2 W=\lceil n/2\rceil$  and $2^{\lfloor n/2\rfloor}\mid\mid \det W$ (or equivalently, $2 \mid\mid d_n$). Then $2\nmid\ell$ and hence $\ell\mid\frac{d_n}{2}$.
\end{lemma}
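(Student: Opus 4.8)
The plan is to pass to the auxiliary integral matrix $R=\ell Q$ and to exploit two independent sources of structure: the orthogonality relations $R$ inherits from $Q$, and a parity phenomenon special to walk matrices of graphs. Write $H$ for the graph with $A(H)=Q^\T A(G)Q$. Since $Qe=e$, the Johnson--Newman correspondence gives $W(H)=Q^\T W$, so $R$ satisfies
$RR^\T=\ell^2 I$, $Re=R^\T e=\ell e$ and $R^\T W=\ell W(H)$. Because $\ell$ is the \emph{exact} level, $R\not\equiv 0\pmod 2$. Assume for contradiction that $2\mid\ell$ and write $2^t\mid\mid\ell$ with $t\ge 1$ and $\ell=2^t\ell'$, $\ell'$ odd.

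First I would record the behaviour of $W$ modulo $2$. The involution that reverses a walk shows that $e^\T A^k e$ is even for every $k\ge 1$: odd-length walks are fixed-point-free, while for even length $2m$ the fixed points are indexed by walks of length $m$. Hence over $\mathbb{F}_2$ the Gram matrix $\overline{W}^\T\,\overline{W}$ equals $\diag(n,0,\dots,0)$, so the column space $C=\mathrm{col}(\overline{W})$ carries a bilinear form of rank at most $1$ and its radical is totally isotropic of codimension at most $1$ in $C$. The hypothesis $\rank_2 W=\lceil n/2\rceil$ forces $\dim C$ to be maximal, so $C$ is as isotropic as possible: $C^\perp=C$ when $n$ is even, and $C^\perp$ equals the radical of $C$ when $n$ is odd; in either case $C^\perp\subseteq C$, the unique anisotropic direction being the all-ones vector $e$ (the first column of $W$).

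Next I would localise at $2$. The two hypotheses force the $2$-adic Smith normal form of $W$ to be exactly $D=\diag(1^{\lceil n/2\rceil},2^{\lfloor n/2\rfloor})$, so $W=SDT$ with $S,T\in GL_n(\mathbb{Z}_2)$. Substituting into $R^\T W=\ell W(H)$ and solving column by column expresses $R$ as $R=\ell'(S^{-1})^\T\Delta N^\T$, where $N=W(H)T^{-1}\in M_n(\mathbb{Z}_2)$ and $\Delta=\diag(2^t I,2^{t-1}I)$ with blocks of sizes $\lceil n/2\rceil$ and $\lfloor n/2\rfloor$. If $t\ge 2$, every diagonal entry of $\Delta$ is even, hence $R\equiv 0\pmod 2$; then $\tfrac{\ell}{2}Q=\tfrac12 R$ is integral, contradicting minimality of the level. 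This disposes of every case except $2\mid\mid\ell$.

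It remains to exclude $t=1$, and this is where I expect the real difficulty. Feeding $R=\ell'(S^{-1})^\T\Delta N^\T$ into $RR^\T=\ell^2 I$ forces the integral Gram matrix $N^\T N$ into the block form $\left(\begin{smallmatrix}B_{11}&2B_{12}\\2B_{12}^\T&4B_{22}\end{smallmatrix}\right)$ with $B=S^\T S$ unimodular over $\mathbb{Z}_2$; reducing modulo $2$ and invoking the walk-parity identity for $H$ shows that the leading block $B_{11}$ has rank at most $1$, while reducing $R$ modulo $2$ places $\mathrm{col}(\bar R)\subseteq C^\perp$ and $e\in\ker\bar R$. Over $\mathbb{F}_2$ alone these constraints are self-consistent, so the contradiction must be extracted one step deeper: passing to $\pmod 4$, the vanishing of the lower-right block forces the last $\lfloor n/2\rfloor$ columns of $N$ to be strongly isotropic (each of squared norm divisible by $4$, with pairwise inner products divisible by $4$), and I would combine this with the unimodularity of $B$ and with the fact that the only anisotropic direction of $C$ is $e\in\ker\bar R$ to show that a genuine factor of $2$ in the level is incompatible with $C$ being a maximal near-Lagrangian. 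The main obstacle is precisely this last mod-$4$ bookkeeping: everything above the prime $2$ is automatically consistent, and the exact divisibility $2\mid\mid d_n$ has to be converted into the statement that no regular rational orthogonal $Q$ can have even level without either forcing a second factor of $2$ into $d_n$ or dropping $\rank_2 W$ below $\lceil n/2\rceil$. Once $2\nmid\ell$ is established, $\ell\mid d_n$ gives $\ell\mid d_n/2$ at once.
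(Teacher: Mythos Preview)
The paper does not prove this lemma; it is quoted from \cite{wang2017JCTB} without proof, so there is no in-paper argument to compare against directly. Nonetheless, your proposal has a genuine gap.

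A preliminary remark: since $\ell\mid d_n$ (this is stated in the paper just before Lemma~\ref{pl}) and the hypothesis here is $2\mid\mid d_n$, one has $4\nmid\ell$ for free. Your entire ``$t\ge 2$'' reduction is therefore unnecessary; the only nontrivial case is $t=1$, i.e.\ $2\mid\mid\ell$.

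That is precisely the case you do not finish. You explicitly write ``this is where I expect the real difficulty'' and ``the main obstacle is precisely this last mod-$4$ bookkeeping,'' and then describe what you \emph{would} do (``I would combine this with\ldots to show that a genuine factor of $2$ in the level is incompatible with\ldots'') without actually carrying it out. The constraints you have assembled---the orthogonality relations for $R$, the near-Lagrangian structure of $C=\mathrm{col}(\overline{W})$, the block shape of $N^\T N$---are, as you yourself observe, mutually consistent over $\mathbb{F}_2$; you have not shown how lifting to $\mathbb{Z}/4\mathbb{Z}$ produces a contradiction, nor is it clear that it does along the lines you sketch. In particular, your outline never uses the fact that $Q^\T A(G)Q$ is a $(0,1)$-matrix with zero diagonal (this is built into the definition of $\mathcal{Q}(G)$), and the argument in \cite{wang2017JCTB} is not a short bookkeeping step. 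As written, the proposal stops exactly at the point that carries all the content of the lemma.
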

Both of the above lemmas have been strengthened in a recent paper of Qiu et al. \cite{qiuarXiv}, using a new and unified approach. In particular, they  establish a stronger version of Lemma \ref{pl}  as follows.
\begin{lemma}\cite{qiuarXiv} Let $Q\in \mathcal{Q}(G)$ with level $\ell$, and $p$ be an odd prime. Suppose that $\rank_p W=n-1$ and  $p^k\mid\mid \det W$ (or equivalently  $p^k\mid\mid d_n$) for some positive integer $k$. Then $p^k\nmid\ell$ and hence $\ell\mid\frac{d_n}{p}$.
\end{lemma}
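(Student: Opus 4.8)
The plan is to pass from $Q$ to the integral matrix $N:=\ell Q$ and analyze it $p$-adically. First I would record the relations satisfied by $N$: it is integral, $NN^{\T}=N^{\T}N=\ell^{2}I_n$ (orthogonality), $Ne=N^{\T}e=\ell e$ (regularity), $N^{\T}A(G)=A(H)N^{\T}$ where $A(H)=Q^{\T}A(G)Q$ is the adjacency matrix of the cospectral mate, and, crucially, $N^{\T}W=\ell W(H)$ is integral, so $W^{\T}N\equiv 0\pmod{p^{\,v_p(\ell)}}$, where $v_p$ denotes the $p$-adic valuation. Since $\ell\mid d_n$ is already known, it suffices to prove $v_p(\ell)\le k-1$. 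Also, because $\rank_p W=n-1$ and $p^{k}\mid\mid\det W$, the Smith normal form of $W$ over the $p$-adic integers $\mathbb{Z}_p$ is exactly $\diag[1,\ldots,1,p^{k}]$, so we may write $W=UDV$ with $U,V\in \mathrm{GL}_n(\mathbb{Z}_p)$ and $D=\diag[1,\ldots,1,p^{k}]$.

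Next I would put $v:=(U^{\T})^{-1}e_n$, an integral vector with $v\not\equiv 0\pmod p$; then $W^{\T}v=p^{k}(V^{\T}e_n)$ has $p$-adic valuation exactly $k$, and (since $e$ is the first column of $W$, and by Cayley--Hamilton) $e^{\T}A(G)^{m}v\equiv 0\pmod{p^{k}}$ for all $m\ge 0$. Feeding $W=UDV$ into $W^{\T}N\equiv 0\pmod{p^{\,v_p(\ell)}}$ and assuming $v_p(\ell)\ge k$, one finds that each column of $N$ is a scalar multiple of $p^{\,v_p(\ell)-k}v$ modulo $p^{\,v_p(\ell)}$; that is, $N\equiv p^{\,v_p(\ell)-k}vs^{\T}\pmod{p^{\,v_p(\ell)}}$ for some integral vector $s$. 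If $v_p(\ell)\ge k+1$ this forces $N\equiv 0\pmod p$, i.e.\ $\tfrac1p N$ is integral, contradicting the minimality of $\ell$. So it remains to rule out $v_p(\ell)=k$; there $N\equiv vs^{\T}\pmod{p^{k}}$, and minimality forces $s\not\equiv 0\pmod p$.

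In this remaining case I would extract congruences modulo $p^{k}$ from the relations above. From $NN^{\T}=N^{\T}N=\ell^{2}I\equiv 0\pmod{p^{k}}$ one gets $(s^{\T}s)\,vv^{\T}\equiv 0$ and $(v^{\T}v)\,ss^{\T}\equiv 0\pmod{p^{k}}$, hence (as $v$ and $s$ each have a unit coordinate) $s^{\T}s\equiv v^{\T}v\equiv 0\pmod{p^{k}}$; from $Ne=N^{\T}e=\ell e$ one gets $s^{\T}e\equiv v^{\T}e\equiv 0\pmod{p^{k}}$; and from $N^{\T}A(G)=A(H)N^{\T}$ one gets $s\,(A(G)v)^{\T}\equiv(A(H)s)\,v^{\T}\pmod{p^{k}}$, which, on comparing the two rank-one matrices and using the unit coordinates of $s$ and $v$, forces $A(G)v\equiv\lambda v$ and $A(H)s\equiv\lambda s\pmod{p^{k}}$ for a common $\lambda\in\mathbb{Z}/p^{k}\mathbb{Z}$; since $G$ and $H$ are generalized cospectral they have the same characteristic polynomial $\phi$, and multiplying $(A(G)-\lambda I)v\equiv 0$ by $\operatorname{adj}(A(G)-\lambda I)$ shows $\phi(\lambda)\equiv 0\pmod{p^{k}}$.

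The crux, and what I expect to be the main obstacle, is to derive a contradiction from this rigid picture: $v$ generates the cyclic order-$p^{k}$ summand of $\operatorname{coker}W$, is isotropic and orthogonal to $e$ modulo $p^{k}$, and is an $A(G)$-eigenvector modulo $p^{k}$, with symmetric statements for $s$ and $H$, while $N\equiv vs^{\T}\pmod{p^{k}}$. My intended route is a descent: writing $N=vs^{\T}+p^{k}N_1$ and substituting into $NN^{\T}=\ell^{2}I$ gives, using that $p$ is odd (so $2$ is invertible), $N_1s\equiv -2^{-1}(s^{\T}s/p^{k})\,v\pmod{p^{k}}$ and symmetrically $N_1^{\T}v\equiv -2^{-1}(v^{\T}v/p^{k})\,s\pmod{p^{k}}$; one then tries to peel off another factor of $p$ from $N$, iterate, and eventually produce a proper divisor of $\ell$ whose associated multiple of $Q$ is already integral, contradicting minimality. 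Controlling the interplay between this peeling and the fact that $W^{\T}v$ has $p$-adic valuation \emph{exactly} $k$ is the delicate point; alternatively one could anchor an induction at the case $k=1$, which is precisely Lemma~\ref{pl}.
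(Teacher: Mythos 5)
Your normalization of the problem is sound as far as it goes: passing to $N=\ell Q$, using the Smith normal form of $W$ to get $N\equiv p^{\,v_p(\ell)-k}vs^{\T}\pmod{p^{\,v_p(\ell)}}$, disposing of $v_p(\ell)\ge k+1$ (which is in any case already excluded by $\ell\mid d_n$ and $p^k\mid\mid d_n$), and extracting, in the remaining case $v_p(\ell)=k$, the congruences $v^{\T}v\equiv s^{\T}s\equiv e^{\T}v\equiv e^{\T}s\equiv 0$ and $A(G)v\equiv\lambda v$, $A(H)s\equiv\lambda s\pmod{p^k}$. But the proof stops exactly where the lemma begins. Everything you derive is \emph{consistent}, not contradictory: the mod-$p$ content of your list (a vector spanning $\ker_p W^{\T}$ that is isotropic, orthogonal to $e$, and an eigenvector of $A$ mod $p$) is precisely what the generating vector of a primitive matrix satisfies, and such matrices genuinely exist for graphs with $p^2\mid\mid\det W$; so no contradiction can fall out of these relations alone. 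The whole content of the lemma is that the case $v_p(\ell)=k$ is impossible, and that impossibility must exploit the hypothesis that $p^k$ divides $\det W$ \emph{exactly} --- one has to show that the existence of such an $N$ (equivalently, the isotropy modulo the appropriate power of $p$ of the vector attached to the order-$p^k$ summand of the cokernel of $W$) would force an extra factor of $p$ into $\det W$ or drop $\rank_p W$ below $n-1$. Your proposal never engages with that step, and you say so yourself.

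The descent you sketch cannot supply it in the form described. In the case $v_p(\ell)=k$ you have $N\equiv vs^{\T}\pmod{p^k}$ with both $v$ and $s$ possessing unit coordinates mod $p$, so $N\not\equiv 0\pmod p$ and no further factor of $p$ can be peeled from $N$; ``producing a proper divisor of $\ell$ whose associated multiple of $Q$ is integral'' is therefore unreachable by iterating on $N$, and the relations $N_1s\equiv -2^{-1}(s^{\T}s/p^{k})v\pmod{p^{k}}$ and its mirror are again mere consistency conditions, not obstructions. Anchoring an induction at $k=1$ does not help either, since the inductive step is exactly the missing argument (and even the base case $k=1$ requires the anisotropy-type argument, not a descent). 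Note finally that the paper itself contains no proof to compare against --- the lemma is quoted from Qiu, Wang, Wang and Zhang \cite{qiuarXiv} --- so what you have is a correct and useful reduction to the essential configuration, but the statement remains unproved.
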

The following corollary is immediate.
\begin{corollary}\label{lp}
	Let $G\in \mathcal{F}_n^p$. Then $\ell(Q)=1$ or $\ell(Q)=p$ for any matrix $Q\in \mathcal{Q}(G)$.
\end{corollary}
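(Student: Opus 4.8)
The plan is to read everything off the Smith normal form of $W=W(G)$ and then apply the level-divisibility lemmas one prime at a time. By Definition \ref{deff} the invariant factors of $W$ are $d_1=\cdots=d_{\lceil n/2\rceil}=1$, then $d_i=2$ for $\lceil n/2\rceil<i<n$, and $d_n=2p^2b$, so $|\det W|=2^{\lfloor n/2\rfloor}p^2b$. Since it is recorded above that $\ell(Q)\mid d_n$ for every $Q\in\mathcal{Q}(G)$, it suffices to prove that $2\nmid\ell$, that $p^2\nmid\ell$, and that $q\nmid\ell$ for every odd prime $q$ dividing $b$; these three facts, together with $d_n=2p^2b$ (where $b$ is odd and square-free, $p$ is odd and $p\nmid b$), force $\ell(Q)\in\{1,p\}$.

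For each exclusion I would first check the rank and precise-divisibility hypotheses directly from the Smith normal form. Over $\mathbb{F}_2$ exactly the first $\lceil n/2\rceil$ invariant factors are units, so $\rank_2 W=\lceil n/2\rceil$; and $b$ odd gives $2\mid\mid d_n$, equivalently $2^{\lfloor n/2\rfloor}\mid\mid\det W$. Lemma \ref{tl} then yields $2\nmid\ell$. The only invariant factor divisible by $p$ is $d_n$, so $\rank_p W=n-1$; and $p$ odd with $p\nmid b$ gives $p^2\mid\mid d_n$, equivalently $p^2\mid\mid\det W$. The strengthened form of Lemma \ref{pl} due to Qiu et al.\ then yields $p^2\nmid\ell$. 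Finally, for an odd prime $q\mid b$ (so $q\neq2$ and $q\neq p$) the only invariant factor divisible by $q$ is $d_n$, so $\rank_q W=n-1$, and $b$ square-free gives $q\mid\mid d_n$, equivalently $q\mid\mid\det W$; Lemma \ref{pl} (or the Qiu et al.\ lemma with $k=1$) yields $q\nmid\ell$.

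Combining the three exclusions, $p$ is the only prime that may divide $\ell(Q)$ and it divides $\ell(Q)$ at most once, so $\ell(Q)\in\{1,p\}$, as claimed. I do not expect any genuine obstacle here: the statement is an immediate corollary of the quoted lemmas, and the only point needing a moment's attention is confirming from the Smith normal form that $\rank_q W=n-1$ for \emph{every} odd prime $q\mid d_n$, which holds precisely because the penultimate invariant factor $d_{n-1}$ has no odd prime divisor (it equals $1$ or $2$).
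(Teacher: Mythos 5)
Your proposal is correct and is exactly the argument the paper has in mind: the paper states the corollary as ``immediate'' from Lemmas \ref{pl}, \ref{tl} and the Qiu et al.\ strengthening, and your write-up simply makes explicit the routine verifications (reading $\rank_2 W$, $\rank_p W$, $\rank_q W$ and the precise divisibilities off the Smith normal form) that the paper leaves to the reader. No gaps; same route, just with the details filled in.
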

Note that any regular rational orthogonal matrix with level one is a permutation matrix. Since permutation matrices  generate isomorphic graphs, we shall be mainly concerned with  the case $\ell(Q)=p$.
\begin{lemma}\label{rk1}
	Let $G\in \mathcal{F}_{n}^p$ and $Q\in \mathcal{Q}(G)$. If  $\ell(Q)=p$ then $\rank_p pQ=1$.
\end{lemma}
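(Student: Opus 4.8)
The plan is to produce, from $Q$, a graph $H$ generalized cospectral with $G$ whose walk matrix is tied to $Q$ by a clean identity, and then to read off $\rank_p(pQ)$ directly from $\rank_p W(G)$. Since $Q\in\mathcal{Q}(G)$, by definition $A(H):=Q^\T A(G)Q$ is the adjacency matrix of some graph $H$. Because $Q$ is regular ($Qe=e$, equivalently $Q^\T e=e$) and orthogonal ($Q^\T=Q^{-1}$), we get $A(H)^i e=Q^\T A(G)^i Q e=Q^\T A(G)^i e$ for every $i$; collecting these columns yields the identity $W(H)=Q^\T W(G)$.

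Next I would pass to $\mathbb{F}_p$. Since $\ell(Q)=p$, the matrix $pQ$ is integral, hence so is its transpose $(pQ)^\T=pQ^\T$. Multiplying $W(H)=Q^\T W(G)$ by $p$ gives $p\,W(H)=(pQ)^\T W(G)$, an identity between integral matrices whose left-hand side is $\equiv 0\pmod p$; therefore $(pQ)^\T W(G)\equiv 0\pmod p$. In other words, over $\mathbb{F}_p$ the column space of $W(G)$ is annihilated by $(pQ)^\T$, so it lies in the null space of $(pQ)^\T$. Because $G\in\mathcal{F}_n^p$, the Smith normal form in Definition \ref{deff} has exactly one invariant factor ($2p^2b$) divisible by the odd prime $p$, so $\rank_p W(G)=n-1$; thus the column space of $W(G)$ over $\mathbb{F}_p$ has dimension $n-1$, which forces $\dim\ker_p (pQ)^\T\ge n-1$ and hence $\rank_p(pQ)=\rank_p(pQ)^\T\le 1$.

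For the reverse inequality, $\ell(Q)=p>1$ means $Q$ itself is not integral, so $pQ$ cannot be $\equiv 0\pmod p$; hence $\rank_p(pQ)\ge 1$. Combining the two bounds gives $\rank_p(pQ)=1$, as claimed.

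I do not expect a genuine obstacle here: the only real content is spotting the identity $W(H)=Q^\T W(G)$ and noticing that multiplying through by $p$ converts the orthogonality-induced denominators into an exact congruence mod $p$. The two points to be careful about are that one should not assume $H$ controllable — we use only that $W(H)$ is an integral matrix, true for any graph — and that passing to the transpose does not change the level, so $(pQ)^\T$ is integral whenever $pQ$ is.
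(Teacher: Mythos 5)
Your proof is correct and follows essentially the same route as the paper: both start from $W(H)=Q^\T W(G)$, multiply by $p$ to get a congruence modulo $p$, and use $\rank_p W(G)=n-1$ to bound $\rank_p(pQ)$ above by $1$, with the lower bound coming from the minimality of the level. The only cosmetic difference is that the paper reads the congruence as saying the columns of $pQ$ lie in the one-dimensional null space of $W^\T(G)$ over $\mathbb{F}_p$, whereas you transpose and bound the kernel of $(pQ)^\T$; these are the same argument.
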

\begin{proof}
	Let $H$ be the graph such that $Q^\T A(G)Q=A(H)$. Then  $Q^\T W(G)=W(H)$, or equivalently,  $W^\T(G)Q=W^\T(H)$. Write $\hat{Q}=pQ$. We have $W^\T(G)\hat{Q}=pW^\T(H)\equiv 0\pmod{p}$. As $G\in \mathcal{F}_n^p$, we see that $\rank_p W(G)=n-1$ and hence  $\rank_p W^\T(G)=n-1$. Therefore, the solution space of  $W^\T(G)z\equiv 0\pmod{p}$ is one dimensional. Consequently, $\rank_p \hat{Q}\le 1$. On the other hand, since $\ell(Q)=p$, the minimality of $\ell(Q)$ means that $\hat{Q}$ contains at least one entry which is nonzero over $\mathbb{F}_p$. Thus  $\rank_p \hat{Q}\ge 1$. This proves that $\rank_p \hat{Q}= 1$, as desired.
\end{proof}
\subsection{Primitive matrix and its uniqueness}
We always assume that $p$ is an odd prime.
\begin{definition}\textup{
	A regular rational orthogonal matrix $Q$ of level $p$ is called a \emph{primitive matrix} if $\rank_p(pQ)=1$.  }
\end{definition}
\begin{example}\label{np4}\normalfont
	Consider a  regular rational orthogonal matrix
	\begin{equation}
	Q=\frac{1}{3}\left(\begin{matrix}-1&2&0&2\\0&0&3&0\\2&-1&0&2\\2&2&0&-1\end{matrix}\right).
	\end{equation}
One can see that  $\ell(Q)=3$ and $\rank_3(3Q)=1$. Thus, $Q$ is a primitive matrix.
\end{example}
Removing the second row and the third column from $Q$, the resulting submatrix is also a primitive matrix, each entry of which  is nonintegral. The following lemma summarizes this phenomenon in a slightly different manner.
\begin{lemma}\label{qdiag}
	Let $Q$ be a primitive matrix of order $n$. If there exists some entry which is nonintegral, then after necessary row permutations and column permutations, $Q$ has the  quasi-diagonal  form $\diag[Q_0,I]$, where  $Q_0$ is a primitive matrix containing none integral entries.
\end{lemma}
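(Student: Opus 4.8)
The plan is to exploit the rank-one structure of $\hat{Q}:=pQ$ modulo $p$ together with the orthogonality and regularity of $Q$. Since $\rank_p\hat{Q}=1$, we may write $\hat{Q}\equiv\alpha\beta^\T\pmod p$ for suitable nonzero vectors $\alpha,\beta\in\mathbb{F}_p^{\,n}$ (the column space is one-dimensional). An entry $Q_{ij}$ is nonintegral precisely when $\hat{Q}_{ij}\not\equiv 0\pmod p$, i.e.\ when $\alpha_i\neq 0$ and $\beta_j\neq 0$. Setting $R=\{i\colon\alpha_i\neq 0\}$ and $C=\{j\colon\beta_j\neq 0\}$, the nonintegral entries of $Q$ are exactly those indexed by $R\times C$; in particular $R,C\neq\varnothing$ by hypothesis, and it suffices to show $|R|=|C|$ and that, after reordering, $Q=\diag[Q_0,I]$ where $Q_0$ is the submatrix of $Q$ on rows $R$ and columns $C$.

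First I would analyse the integral rows and columns. If $i\notin R$, then row $i$ of $Q$ is an integral unit vector (rows of an orthogonal matrix are orthonormal) whose entries sum to $1$ (regularity, $Qe=e$), hence it equals a standard basis vector $e_{\pi(i)}$. Then column $\pi(i)$ has an entry equal to $1$ and unit norm, so it equals $e_i$; in particular it is integral, so $\pi(i)\notin C$. The map $\pi$ from the rows outside $R$ to the columns outside $C$ is injective, since $\pi(i)=\pi(i')$ would force rows $i$ and $i'$ of the invertible matrix $Q$ to coincide; hence $n-|R|\le n-|C|$. Applying the same reasoning to $Q^\T$ — which is again a primitive matrix, being regular orthogonal of level $p$ with $\rank_p(pQ^\T)=\rank_p(\hat{Q}^\T)=1$ — gives the reverse inequality, so $|R|=|C|$ and $\pi$ is a bijection from the rows outside $R$ onto the columns outside $C$.

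Next I would reorder: list the rows of $R$ first, then the rows outside $R$ in some order $i_1,\dots,i_{n-|R|}$; list the columns of $C$ first, then the columns $\pi(i_1),\dots,\pi(i_{n-|R|})$. In these coordinates the bottom-right block is $I$, because row $i_k=e_{\pi(i_k)}$ meets only column $\pi(i_k)$; the bottom-left block vanishes because $\pi(i_k)\notin C$; and the top-right block vanishes because column $\pi(i_k)=e_{i_k}$ with $i_k\notin R$. Hence $Q=\diag[Q_0,I]$. Finally $Q_0$ inherits every required property: it is orthogonal as a diagonal block of an orthogonal matrix; it is regular, since $Qe=e$ restricts to $Q_0e=e$ on the first $|R|$ coordinates; every entry of $Q_0$ is nonintegral by the description of $R\times C$, so $Q_0$ is not integral and, as $p$ is prime, $\ell(Q_0)=p$; and $pQ_0$ is a nonzero submatrix of $\hat{Q}$ modulo $p$, whence $1\le\rank_p(pQ_0)\le\rank_p\hat{Q}=1$. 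Thus $Q_0$ is a primitive matrix with no integral entry.

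The routine parts are the rank-one factorization and the closing verification of the properties of $Q_0$. The step I expect to require the most care is the middle one — pinning down the integral rows and columns as standard basis vectors and proving $|R|=|C|$ — since this is exactly where orthogonality and the regularity condition $Qe=e$ must be combined (without regularity an integral unit row could be $-e_k$), and where the passage to $Q^\T$ is needed to upgrade one inequality into an equality.
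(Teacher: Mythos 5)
Your proof is correct and follows essentially the same route as the paper's: both exploit $\rank_p(pQ)=1$ to locate the integral entries and then use orthogonality together with $Qe=e$ to identify the integral rows and columns with standard basis vectors that can be permuted into an identity block. Your rank-one factorization $pQ\equiv\alpha\beta^\T\pmod{p}$ and the transpose argument giving $|R|=|C|$ merely make explicit a counting step that the paper handles implicitly by pairing each entry equal to $1$ with its row and its column.
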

\begin{proof}
	Clearly for any primitive matrix $Q$, the integral entry of $Q$ can only be $0$ or $1$. Moreover, we claim that  $Q$ contains $0$ if and only if $Q$ contains $1$. The `if' part is clear since each row (and column) of $Q$ has  length one in $\mathbb{R}^n$. Let the $(i,j)$-entry $q_{i,j}$ of $Q$ is zero. Write $\hat{Q}=pQ$. Then either the $i$-th row or the $j$-th column of $\hat{Q}$ is the zero vector over $\mathbb{F}_p$ since otherwise one would easily find a $2\times 2$ invertible submatrix  in $\hat{Q}$, contradicting the fact that $\rank_p(\hat{Q})=1$. Clearly, in either case, $\hat{Q}$ contains $p$ as an entry, i.e., $Q$ contains $1$ as an entry.
	
	Suppose that $Q$ has exact $k$ entries equal one. Then these $k$ entries clearly lie in different rows and in different columns in $Q$, and all other entries in the involved rows and columns are necessarily zero. Thus, by some obvious row and column permutations, we can change $Q$ into a quasi-diagonal form
	$\diag[Q_0,I_k]$. Clearly, $Q_0$ is a primitive matrix. Finally, as $Q_0$ does not contain $1$ as an entry, it  does not contain $0$ as an entry. Thus, $Q_0$ contains none integral entries.
\end{proof}

\begin{definition}\normalfont
	Let $v$ be an $n$-dimensional integral vector and $Q$ be a primitive matrix. We say $Q$ can be \emph{generated} from $v$ (or $v$ can \emph{generate} $Q$) if each column of $pQ$ is a multiple of $v$ over $\mathbb{F}_p$.
\end{definition}
\begin{remark}\label{vcv}\normalfont
If a primitive matrix $Q$ can be generated from $v$ and $v'$ is an integral vector such that $v'\equiv cv\pmod{p}$ for some $c\not\equiv 0 \pmod{p}$, then $Q$ can also be generated from $v'$.
\end{remark}
Now we can give a necessary condition for a graph in $\mathcal{F}_n$ to have a generalized cospectral mate.
\begin{proposition}\label{gcmp}
	Let $G\in \mathcal{F}_n^p$. If  $G$ is not DGS then any nontrivial solution to $W^\T z\equiv 0\pmod{p}$ can generate some primitive matrix.
\end{proposition}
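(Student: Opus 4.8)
The plan is to construct the required primitive matrix directly from a generalized cospectral mate of $G$, and then to check that it is generated by the prescribed vector. First I would use the hypothesis that $G$ is not DGS: since $\det W\neq 0$, the graph $G$ is controllable, so by the Johnson--Newman lemma and the controllability lemma recalled above there is a graph $H$ that is generalized cospectral with $G$ but not isomorphic to it, together with a unique regular rational orthogonal matrix $Q$ satisfying $Q^\T A(G)Q=A(H)$, namely $Q=W(G)W(H)^{-1}$.

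The next step is to identify $Q$ as a primitive matrix. Because $G\in\mathcal{F}_n^p$, Corollary~\ref{lp} forces $\ell(Q)\in\{1,p\}$. If $\ell(Q)=1$ then $Q$ is a permutation matrix, which would make $H$ isomorphic to $G$, contradicting the choice of $H$; hence $\ell(Q)=p$. Lemma~\ref{rk1} then gives $\rank_p(pQ)=1$, so $Q$ is a primitive matrix by definition.

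It remains to verify that this $Q$ is generated from the given nontrivial solution $z$ of $W^\T z\equiv 0\pmod p$. From $Q^\T A(G)Q=A(H)$ and $Q^\T e=e$ one obtains $Q^\T W(G)=W(H)$, hence $W(G)^\T Q=W(H)^\T$; multiplying by $p$ yields $W(G)^\T(pQ)=p\,W(H)^\T\equiv 0\pmod p$. Thus every column of $pQ$ solves $W^\T x\equiv 0\pmod p$. Since $\rank_p W=n-1$, this solution space is one-dimensional over $\mathbb{F}_p$, and as $z$ is nontrivial it spans that line; therefore each column of $pQ$ is a multiple of $z$ over $\mathbb{F}_p$, which is exactly the statement that $Q$ is generated from $z$. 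By Remark~\ref{vcv} this conclusion does not depend on which nontrivial representative of the solution line was chosen, so the claim holds for \emph{any} nontrivial solution.

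As for difficulty, there is essentially no serious obstacle: the proposition is a bookkeeping consequence of the structural lemmas already in place. The only points needing a little care are the passage from the similarity $Q^\T A(G)Q=A(H)$ to the walk-matrix identity $W(G)^\T Q=W(H)^\T$, and the observation that the one-dimensionality of the mod-$p$ null space of $W^\T$ is precisely what collapses all columns of $pQ$ onto the single line spanned by $z$.
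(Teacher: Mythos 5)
Your proposal is correct and follows essentially the same route as the paper: take the (non-permutation) regular rational orthogonal matrix $Q$ arising from a cospectral mate, invoke Corollary~\ref{lp} and Lemma~\ref{rk1} to conclude $\ell(Q)=p$ and $\rank_p(pQ)=1$, and then observe that the columns of $pQ$ lie in the one-dimensional mod-$p$ null space of $W^\T$, hence are multiples of any nontrivial solution. The only cosmetic difference is that you rederive the identity $W(G)^\T(pQ)\equiv 0\pmod p$ explicitly, whereas the paper simply cites the proof of Lemma~\ref{rk1} for this fact.
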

\begin{proof}
	As $G$ is not DGS, we see that $\mathcal{Q}(G)$ contains a matrix which is not a permutation matrix. Let $Q$ be such a matrix in  $\mathcal{Q}(G)$. Then by Corollary \ref{lp}, we have $\ell(Q)=p$. Moreover, by Lemma \ref{rk1}, we see that $\rank_p (pQ)=1$ and hence $Q$ is a  primitive matrix. From the proof of Lemma \ref{rk1}, we find that the column space of $pQ$ coincides with the (one-dimensional) solution space of $W^\T z\equiv 0\pmod{p}$. The proposition follows.
\end{proof}
\begin{remark}\label{zQ}\normalfont
	 Under the assumption of Proposition \ref{gcmp},  each matrix $Q\in \mathcal{Q}(G)$ that is not a permutation matrix can be generated from a nontrivial solution to $W^\T z\equiv 0\pmod{p}$.
\end{remark}
Suppose  $Q$ is a primitive matrix generated from $v$. By the very definition, we know that
all matrices obtained from $Q$ by column permutations can also be generated from $v$. A key result of this section is to show the reversed direction: Every primitive matrix generated from $v$ can be obtained from $Q$ by some column permutations.

The following lemma plays a fundamental role in this paper.
\begin{lemma}\label{ort}
	Let $u$ and $v$ be two $n$-dimensional integral column vectors with each entry nonzero modulo $p$. Suppose that (\rmnum{1}) $u$ and $v$ are linearly dependent over $\mathbb{F}_p$; (\rmnum{2}) $u\neq \pm v$; and (\rmnum{3}) $u^\T u=v^\T v=p^2$. Then $u^\T v=0$.
\end{lemma}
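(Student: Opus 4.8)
The plan is to combine a $p$-adic divisibility argument with the (archimedean) Cauchy--Schwarz inequality. First I would exploit hypothesis (\rmnum{1}) together with the assumption that every entry of $u$ and $v$ is nonzero mod $p$: a nontrivial $\mathbb{F}_p$-linear relation $\alpha u+\beta v\equiv 0$ must have $\beta\not\equiv 0$ (else $u\equiv 0$, contradicting the entrywise hypothesis), so $v\equiv cu\pmod p$ with $c=-\alpha\beta^{-1}$, and $c\not\equiv 0$ (else $v\equiv 0$). Hence $v=cu+pw$ for some integral vector $w$, where we may take $c$ to be an ordinary integer with $p\nmid c$.

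Next I would substitute this into hypothesis (\rmnum{3}). Expanding
$p^2=v^\T v=(cu+pw)^\T(cu+pw)=c^2\,u^\T u+2cp\,u^\T w+p^2\,w^\T w=c^2p^2+2cp\,u^\T w+p^2 w^\T w$
and dividing by $p$ gives $2c\,u^\T w=p(1-c^2-w^\T w)$, so $p\mid 2c\,u^\T w$; since $p$ is odd and $p\nmid c$, this forces $p\mid u^\T w$. Then $u^\T v=c\,u^\T u+p\,u^\T w=cp^2+p\,u^\T w$ is divisible by $p^2$.

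Finally, Cauchy--Schwarz gives $|u^\T v|\le \sqrt{u^\T u}\,\sqrt{v^\T v}=p^2$, so the integer $u^\T v$, being a multiple of $p^2$, lies in $\{-p^2,0,p^2\}$. If $u^\T v=\pm p^2$ then equality holds in Cauchy--Schwarz, so $v=\lambda u$ for a real scalar $\lambda$; comparing norms via $u^\T u=v^\T v=p^2$ yields $\lambda^2=1$, hence $v=\pm u$, contradicting hypothesis (\rmnum{2}). Therefore $u^\T v=0$.

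The argument is short, and the only genuinely delicate point is conceptual rather than computational: one must notice that the $p$-adic congruence information (divisibility of $u^\T v$ by $p^2$) and the real-analytic size bound ($|u^\T v|\le p^2$) are exactly complementary, pinning $u^\T v$ down to three values, after which (\rmnum{2}) and the equality case of Cauchy--Schwarz finish the job. Care is needed in the divisibility step to use that $p$ is odd (so that the factor $2$ is invertible mod $p$) and that $p\nmid c$.
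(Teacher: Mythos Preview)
Your proposal is correct and follows essentially the same approach as the paper: establish $p^2\mid u^\T v$ from the $\mathbb{F}_p$-dependence together with $u^\T u=v^\T v=p^2$, then use Cauchy--Schwarz and its equality case (ruled out by (\rmnum{2})) to force $u^\T v=0$. The only cosmetic difference is that the paper obtains the divisibility by expanding $(au+bv)^\T(au+bv)\equiv 0\pmod{p^2}$ directly, whereas you write $v=cu+pw$ and pass through $p\mid u^\T w$; these are the same computation in different dress.
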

\begin{proof}
	Since $u$ and $v$ are linearly dependent over $\mathbb{F}_p$, there exist two integers $a$ and $b$, not both zero in $\mathbb{F}_p$, such that
	\begin{equation}\label{ld}
au+bv\equiv 0\pmod{p}.
	\end{equation} We claim that neither $a$ nor $b$ is zero. Actually, if $a\equiv 0\pmod{p}$ then $b\not\equiv 0\pmod{p}$ and hence we obtain $v\equiv 0\pmod{p}$ by \eqref{ld}. This contradicts our assumption on $v$. Thus  $a\not\equiv 0\pmod{p}$. Similarly, we also have  $b\not\equiv 0\pmod{p}$. This proves the claim.
	
	By \eqref{ld} we have $(au+bv)^\T (au+bv)\equiv 0\pmod{p^2}$, that is, $a^2u^\T u+2abu^\T v+b^2v^\T v \equiv 0\pmod{p^2}$, which can be reduced to
	\begin{equation}\label{abu}
2abu^\T v\equiv 0\pmod{p^2},
	\end{equation}
	as $u^\T u=v^\T v=p^2$. Since $2ab\not\equiv 0\pmod{p}$, Eq. \eqref{abu} can be further reduced to $u^\T v\equiv 0\pmod{p^2}$.
	
	By Cauchy-Schwartz inequality, we have  $|u^\T v|\le \sqrt{u^\T u} \sqrt{v^\T v}$ with the equality holding if and only if $u$ and $v$ are linear dependent over $\mathbb{R}$. From the last two conditions of this lemma, one clearly sees that  $u$ and $v$ are linear independent over $\mathbb{R}$. Therefore, we must have $|u^\T v|<p^2$, which, together with the established congruence $u^\T v\equiv 0\pmod{p^2}$, implies  $u^\T v=0$. This completes the proof.
\end{proof}
\begin{theorem}\label{uQ}
	Let  $Q$ be a primitive matrix generated from $v$. Then every primitive matrix generated from $v$ can be obtained from $Q$ by some column permutations.
\end{theorem}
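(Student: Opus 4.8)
The plan is to work entirely with the integral matrices $\hat Q=pQ$ and $\hat Q'=pQ'$. Since $Q,Q'$ are orthogonal, $\hat Q^\T\hat Q=\hat Q'^\T\hat Q'=p^2I$, so the columns of $\hat Q$ (resp. of $\hat Q'$) are $n$ pairwise orthogonal integral vectors, each of squared length $p^2$, hence an orthogonal basis of $\mathbb R^n$; by hypothesis every column of $\hat Q$ and of $\hat Q'$ is a scalar multiple of $v$ modulo $p$. The statement is equivalent to saying that $\hat Q$ and $\hat Q'$ have the same multiset of columns, and since the columns of an orthogonal matrix are pairwise distinct, it is enough to show the set inclusion: \emph{every column of $\hat Q'$ occurs among the columns of $\hat Q$} (equality of $n$-element sets then follows by counting, giving $\hat Q'=\hat Q\,\Pi$ for a permutation matrix $\Pi$).

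First I would dispose of the integral columns. If a column of $\hat Q'$ is $\equiv 0\pmod p$, then the corresponding column of $Q'$ is an integral unit vector, hence (using $Q'e=e$, which also yields $e^\T Q'=e^\T$, so column sums are $1$) equals $e_j$ for some $j$; thus this column of $\hat Q'$ is $pe_j$ and row $j$ of $Q'$ is $e_i^\T$. The point that makes $v$ the only relevant datum is the observation: \emph{for any primitive matrix $M$ generated from $v$, row $j$ of $M$ is a standard basis covector iff $v_j\equiv 0\pmod p$}. Indeed, if $v_j\equiv 0$ then every entry of row $j$ of $pM$ is $\equiv 0\pmod p$, so row $j$ of $M$ is integral of unit length, hence $e_i^\T$ by the column‑sum condition; conversely, if row $j$ of $M$ is $e_i^\T$ then any nonintegral column of $M$ (one exists as $\ell(M)=p>1$) has $j$‑th entry $0$ yet $\equiv c\,v_j\pmod p$ with $c\not\equiv 0$, forcing $v_j\equiv 0$. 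Applying this to $M=Q'$ and then $M=Q$ shows row $j$ of $Q$ is also trivial, say $e_{i_0}^\T$, so column $i_0$ of $Q$ is $e_j$ and $pe_j$ is a column of $\hat Q$. Equivalently, $\hat Q$ and $\hat Q'$ share exactly the integral columns $\{pe_j:\ v_j\equiv 0\pmod p\}$, indexed by $T':=\{j:\ v_j\equiv 0\pmod p\}$ (this structural picture can alternatively be read off from the quasi‑diagonal normal form of Lemma~\ref{qdiag}).

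For a nonintegral column $q'$ of $\hat Q'$, the same computation shows $q'$ vanishes \emph{identically} (not just mod $p$) on the coordinate set $T'$, that its restriction to $S':=[n]\setminus T'$ is integral with all entries nonzero mod $p$, and that this restriction has squared length $p^2$. Likewise, the nonintegral columns of $\hat Q$ restricted to $S'$ are integral, have all entries nonzero mod $p$, squared length $p^2$, are pairwise orthogonal, and — after counting the $|T'|$ integral columns — number exactly $|S'|$, so they form an orthogonal basis of $\mathbb R^{S'}$. Since $q'|_{S'}$ and each such restricted column $q_i|_{S'}$ are both nonzero multiples of $v|_{S'}$ mod $p$, they are linearly dependent over $\mathbb F_p$, and Lemma~\ref{ort} gives either $q'|_{S'}=\pm q_i|_{S'}$ or $(q'|_{S'})^\T(q_i|_{S'})=0$. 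As $q'|_{S'}\neq 0$, it cannot be orthogonal to every basis vector, so $q'|_{S'}=\pm q_i|_{S'}$ for exactly one $i$; and the sign must be $+$, since each column of $\hat Q'$ and of $\hat Q$ has coordinate sum $p$ (from $e^\T Q'=e^\T$, $e^\T Q=e^\T$) and $p\neq -p$. Hence $q'=q_i$, a column of $\hat Q$, which completes the inclusion and the proof. The only delicate point is the bookkeeping around integral columns — verifying that a nonintegral column is supported exactly on $S'$ and that exactly $|S'|$ restricted columns of $\hat Q$ remain; the core (Lemma~\ref{ort} plus the orthogonal‑basis expansion plus the coordinate‑sum sign check) is short.
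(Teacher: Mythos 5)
Your proof is correct and follows essentially the same route as the paper's: the heart of both arguments is Lemma~\ref{ort} applied to pairs of columns of $pQ$ and $pQ'$, the observation that a nonzero vector cannot be orthogonal to an entire orthogonal basis, and the column-sum condition $e^\T(pQ)=pe^\T$ to rule out the sign $-1$. The only difference is organizational: where the paper reduces the case of zero entries of $v$ to the nonzero case via the quasi-diagonal form of Lemma~\ref{qdiag}, you handle it by directly identifying the integral columns as $\{pe_j:\,v_j\equiv 0\pmod p\}$ and restricting the remaining columns to the support of $v$ --- the same reduction in different clothing.
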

\begin{proof}
	Let $Q'$ be any primitive matrix generated from $v$. We use $\alpha_i$ and $\beta_i$ respectively to denote the $i$-th column of $pQ$ and $pQ'$, for $i=1,2,\ldots,n$. We first consider the case that each entry of $v$ is nonzero modulo $p$.
	
	We claim that  $Q$ (and similarly $Q'$) contains none integral entries. Suppose to the contrary that $Q$ contains an integral entry. Then $Q$ contains one as an entry, say $q_{i,j}=1$.  Let $\alpha_k$ be the $k$-th column such that $\alpha_k\not\equiv 0\pmod{p}$. Then we have $\alpha_k=cv\pmod{p}$ for some integer $c$. Since $\alpha_k$ contains at least one entry which is nonzero modulo $p$, we see that $c\not\equiv 0\pmod{p}$. Consequently, as we assume that each entry of $v$ is nonzero modulo $p$, we find that each entry of $\alpha_k$ is nonzero modulo $p$, that is each entry of $\alpha_k/p$ is nonintegral. But, as $q_{i,j}=1$, the $i$-th row of $Q$ must be a standard unit vector and hence at least one entry of  $\alpha_k/p$ is integral. This contradiction proves the claim.
	
   We next claim that either $\alpha_i=\beta_j$ or $\alpha^\T_i\beta_j=0$ for each pair (possibly equal) $i$ and $j$. We may assume $\alpha_i\neq\beta_j$. Noting that $e^\T \alpha_i=e^\T \beta_j=p$, it can never happen that $\alpha_i\neq -\beta_j$. Thus, $\alpha_i\neq \pm \beta_j$. Moreover, as $Q$ and $Q'$ contains none integral entries, both $\alpha_i$ and $\beta_j$ are nonzero multiples of $z$ over $\mathbb{F}_p$. Thus, $\alpha_i$ and $\beta_j$ are linearly dependent over $\mathbb{F}_p$, and each entry of $\alpha_i$ and $\beta_j$ is nonzero modulo $p$. Of course,   $\alpha^\T_i\alpha_i=\beta^\T_j\beta_j=p^2$ by the orthogonality of $Q$ and $Q'$. Therefore, all conditions of Lemma \ref{ort} for $u=\alpha_i$ and $v=\beta_j$ are satisfied and we can obtain  $\alpha^\T_i\beta_j=0$, as claimed. Now we fix $\beta_j$ and consider all possible $\alpha_i$'s. Note that $\{\alpha_1,\alpha_2,\ldots,\alpha_n\}$ constitutes a basis of $\mathbb{R}^n$. Since $\beta_j$ is a nonzero vector in $\mathbb{R}^n$, the equality $\alpha_i^\T \beta_j=0$ cannot hold for all $i$ simultaneously. Thus, by the claim, $\beta_j=\alpha_i$ for some $i$.  That is, the $j$-th column of $pQ'$ must appear as a column of $pQ$. Noting that all column of   $pQ'$ are pairwise different, we see that $pQ'$ can be obtained from $pQ$, or equivalently,  $Q'$ can be obtained from $Q$,  by some column permutations.

   It remains to consider that case that $v$ contains  at least one entry which is zero modulo $p$. For convenience, we make a similar assumption on $v$ as in the proof of Lemma \ref{qdiag}. Assume all nonzero entries of $v$ appear as the first $k$ entries and  write
  $$v=\left(\begin{matrix}v_1\\v_2\end{matrix}\right),$$
  where $v_1$ is the $k$-dimensional column vector consisting of the first $k$ entries and $v_2$ is the $(n-k)$-dimensional zero vector. We remark that this assumption corresponds to the row permutations in Lemma \ref{qdiag}. Next we forbid row permutations and continue to use only column permutations to transform $Q$ and $Q'$ into a quasi-diagonal forms. It is not difficult to see that $Q$ and $Q'$ have like quasi-diagonal form, say $\diag[Q_0,I]$ and $\diag[Q'_0,I]$, where both $Q_0$ and $Q'_0$ have order $k$, the number of nonzero entries in $v$. Moreover, both $Q_0$ and $Q'_0$ can be generated from $v_1$. Using the conclusion for the first case, we see that $Q'_0$ can be obtained from $Q_0$ by some column permutations. Clearly, taking the same  column permutations on $\diag[Q_0,I]$ will result in  $\diag[Q'_0,I]$. Since $\diag[Q_0,I]$ and $\diag[Q'_0,I]$ are obtained from $Q$ and $Q'$ by some column permutations, we find that $Q'$ can be obtained from $Q$ by some column permutations.
\end{proof}
Before we present  the proof of Theorem \ref{atmost1}, we  would like to record the following fact from the above proof.
\begin{remark} \normalfont\label{intfree}
	If each entry of $v$  is nonzero $\pmod{p}$ and $Q$ is a primitive matrix generated from $v$, then each entry of $Q$ is nonintegral (or equivalently, nonzero).
	\end{remark}
\begin{proof}[Proof of Theorem \ref{atmost1}]
Let $G\in \mathcal{F}_n^p$. We may assume that $G$ is not DGS. Let $H_1$ and $H_2$ be any two generalized cospectral mates of $G$. It suffices to show that $H_1$ and $H_2$ are isomorphic. Let  $Q_1$ and $Q_2$ be the corresponding matrices such that $Q_1^\T A(G) Q=A(H_1)$ and $Q_2^\T A(G)Q_2=A(H_2)$.  Let $v$ be a nontrivial solution to $W^\T(G) z\equiv 0\pmod{p}$. Then by Corollary \ref{gcmp} and Remark \ref{zQ}, we see that both matrices $Q_1$ and $Q_2$  can be generated from $v$. It follows from Theorem \ref{uQ} that $Q_2=Q_1P$ for some permutation matrix $P$. Now, we have
$A(H_2)=Q_2^\T A(G)Q_2=Q_2^\T Q_1 A(H_1)Q_1^\T Q_2=P^\T A(H_1)P$, indicating that $H_1$ and $H_2$ are isomorphic. This completes the proof of Theorem \ref{atmost1}.
	\end{proof}
\subsection{0-1 property of $Q^\T AQ$}
The main aim of this subsection is to show that  the converse of Proposition \ref{gcmp} is also true.  We need an interesting and somewhat unexpected  result on adjacency matrix of simple graphs,  which may have independent interests. Roughly speaking, among all integral symmetric matrices, the subsets of all adjacency matrices are `closed' under generalized cospectrality. Here, the generalized spectrum of a matrix $A$ naturally refers to the spectrum of $A$ together with the spectrum of  $J-I-A$.
\begin{lemma}	\label{simcld}
	Let $A$ be an adjacency matrix and $B$ be an integral symmetric matrix. If $A$ and $B$ are generalized cospectral then $B$ must also be an adjacency matrix.
\end{lemma}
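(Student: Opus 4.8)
The plan is to boil the hypothesis down to two scalar identities and then let the integrality of $B$ finish the job. Write $A$ for the adjacency matrix of the graph and let $B$ be symmetric and integral with the same generalized spectrum. From $\sigma(A)=\sigma(B)$ one gets $\tr B^k=\tr A^k$ for all $k$; in particular $\tr B=\tr A=0$ and $\tr B^2=\tr A^2$. From the second half of the generalized spectrum, $\sigma(J-I-A)=\sigma(J-I-B)$, I would extract the ``linear'' invariant $e^\T A e$: expanding $(J-I-X)^2$ and using $J^2=nJ$ and $\tr(JX)=e^\T X e$ gives $\tr(J-I-X)^2=n^2-n-2\,e^\T X e+\tr X^2$, so equating the values at $X=A$ and $X=B$ and cancelling the already-known equal terms $\tr A^2=\tr B^2$ yields $e^\T B e=e^\T A e$. (Equivalently one may quote the regular orthogonal matrix $Q$ with $Q^\T A Q=B$, $Qe=e$, which delivers $\tr B=\tr A$, $\tr B^2=\tr A^2$ and $e^\T B e=e^\T A e$ simultaneously.)

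The decisive point is that for the \emph{adjacency} matrix these two invariants are equal: $A$ is a symmetric $(0,1)$-matrix, so $A_{ij}^2=A_{ij}$ and hence $\tr A^2=\sum_{i,j}A_{ij}^2=\sum_{i,j}A_{ij}=e^\T A e$. Chaining the identities, $\sum_{i,j}B_{ij}^2=\tr B^2=\tr A^2=e^\T A e=e^\T B e=\sum_{i,j}B_{ij}$, that is
$$\sum_{i,j}B_{ij}(B_{ij}-1)=0.$$
Here the only genuine trick enters: each summand is $m(m-1)$ for an integer $m$, and $m(m-1)\ge 0$ for every integer $m$ (the parabola $t(t-1)$ is negative only on the open interval $(0,1)$, which contains no integer), with equality precisely when $m\in\{0,1\}$. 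A vanishing sum of nonnegative terms forces every term to vanish, so $B_{ij}\in\{0,1\}$ for all $i,j$, i.e. $B$ is a symmetric $(0,1)$-matrix.

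It remains to clear the diagonal: each $B_{ii}\in\{0,1\}$ and $\sum_i B_{ii}=\tr B=0$, so every $B_{ii}=0$, and $B$ is a symmetric $(0,1)$-matrix with zero diagonal --- an adjacency matrix, as claimed. I do not foresee a real obstacle; the whole weight of the argument is the observation that generalized cospectrality with an adjacency matrix pins $\sum_{i,j}B_{ij}^2$ and $\sum_{i,j}B_{ij}$ to the same integer, after which $m(m-1)\ge 0$ on $\mathbb{Z}$ does everything. The only step needing a moment's care is the passage from the two characteristic polynomials to $e^\T B e=e^\T A e$ (or, more cheaply, invoking the regular orthogonal $Q$).
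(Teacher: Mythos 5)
Your proof is correct and is essentially the paper's argument in different packaging: the paper's invariant $\xi(M)=c_2(M)+c_2(J-I-M)$ is, for symmetric $M$, exactly a combination of $\tr M$, $\tr M^2$ and $e^\T M e$, and its key inequality $2\left(x-\frac{1}{2}\right)^2+\frac{1}{2}\ge 1$ for $x\in\mathbb{Z}$ (with equality iff $x\in\{0,1\}$) is the same fact as your $x(x-1)\ge 0$, so both proofs reduce to the identity $\sum_{i,j}B_{ij}(B_{ij}-1)=0$ plus $\tr B=0$. The only blemish is your displayed formula for $\tr(J-I-X)^2$, which omits a $+2\tr X$ term; this is harmless because $\tr A=\tr B=0$ is already in hand when you equate the two sides.
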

\begin{proof}
	For any $n\times n$ matrix $M$ and integer $k\in\{1,\ldots,n\}$, we use $c_k(M)$ to  denote the coefficient of the term $x^{n-k}$ in the characteristic polynomial $\det(xI-M)$ of $M$. Define $\xi(M)=c_2(M)+c_2(J-I-M)$. It is well known that $(-1)^kc_k(M)$ equals the sum of its principal minors of size $k$; see e.g. \cite[Theorem 1.2.16]{hj2012}. When $k=2$ and $M$ is an adjacency matrix, $c_2(M)$ equals the opposite of the number of edges in the corresponding graph. Since the total number of edges in a graph and in its complement is the constant $\binom{n}{2}$, we have
	\begin{equation}\label{xia}
	\xi(A)=-\binom{n}{2}.
	\end{equation}
	Write $B=(b_{i,j})_{n\times n}$. Note that $\tr(A)=0$ as each diagonal entry of $A$ is zero.  Since $A$ and $B$ are cospectral, we have $\tr(B)=0$, that is,
	\begin{equation}\label{tr0}
	\sum_{1\le k\le n}b_{k,k}=0.
	\end{equation}
	Next we estimate $\xi(B)$,  the sum of $c_2(B)$ and $c_2(J-I-B)$. Noting that  $B$ is symmetric, we have
	\begin{equation}
	c_2(B)=\sum_{1\le i<j\le n}\left|\begin{matrix}
	b_{i,i}&b_{i,j}\\
	b_{j,i}&b_{j,j}\\
	\end{matrix}\right|=\sum_{1\le i<j\le n} b_{i,i}b_{j,j}-\sum_{1\le i<j\le n}b^2_{i,j},
	\end{equation}
	and similarly,
	\begin{equation}
	c_2(J-I-B)=\sum_{1\le i<j\le n}\left|\begin{matrix}
	-b_{i,i}&1-b_{i,j}\\
	1- b_{j,i}&-b_{j,j}\\
	\end{matrix}\right|=\sum_{1\le i<j\le n} b_{i,i}b_{j,j}-\sum_{1\le i<j\le n}(1-b_{i,j})^2.
	\end{equation}
	Adding the above two equalities and using \eqref{tr0}, we find
	\begin{eqnarray}
	\xi(B)&=&2\sum_{1\le i<j\le n}b_{i,i}b_{j,j}-\sum_{1\le i<j\le n}\left(b^2_{i,j}+(1-b_{i,j})^2\right)\nonumber\\
	&=&\left(\sum_{1\le k\le n}b_{k,k}\right)^2-\sum_{1\le k\le n}b^2_{k,k}-\sum_{1\le i<j\le n}\left(2\left(b_{i,j}-\frac{1}{2}\right)^2+\frac{1}{2}\right)\nonumber\\
	&\le&-\sum_{1\le i<j\le n}\left(2\left(b_{i,j}-\frac{1}{2}\right)^2+\frac{1}{2}\right)\label{inq1},
	\end{eqnarray}
	where equality holds in \eqref{inq1} if and only if $\sum_{1\le k\le n}b^2_{k,k}=0$, i.e., all diagonals of $B$ are zero.
	
	Consider the quadratic function $f(x)=2(x-1/2)^2+1/2$, $x\in \mathbb{Z}$. It is easy to see that  $f(x)\ge 1$ for all $x\in \mathbb{Z}$, and the  equality holds if and only if $x\in\{0,1\}$. Since $B$ is integral, we have
	\begin{equation}\label{inq2}
	-\sum_{1\le i<j\le n}\left(2\left(b_{i,j}-\frac{1}{2}\right)^2+\frac{1}{2}\right)\le -\sum_{1\le i<j\le n} 1= -\binom{n}{2},
	\end{equation}
	with equality holding if and only if each non-diagonal entry $b_{i,j}$ is 0 or 1.  Finally, as $A$ and $B$ are generalized cospectral, we must have $\xi(A)=\xi(B)$ and hence $\xi(B)=-\binom{n}{2}$ by \eqref{xia}.
	This means that the equalities must hold in \eqref{inq1} and \eqref{inq2} simultaneously. Using the established conditions for these two equalities, we find that the symmetric matrix $B$ is a $(0,1)$-matrix with vanishing diagonal. This completes the proof of this lemma.
\end{proof}
\begin{lemma}\cite{wang2019LAA}\label{mustint}
	Let $G$ be a controllable graph with $n$ vertices. Let $p$ be an odd prime. Suppose that $p^2\mid \det W$ and $\rank_p W=n-1$. Let $v$ be a nontrivial integral solution to $W^\T z\equiv 0\pmod{p}$. If there exists a primitive matrix $Q$ generated from $v$, then $Q^\T A Q$ is an integral matrix.
\end{lemma}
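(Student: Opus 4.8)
The plan is to prove the equivalent statement that $\hat{Q}^\T A\hat{Q}\equiv 0\pmod{p^2}$, where $\hat{Q}=pQ$, since then $Q^\T AQ=\frac{1}{p^2}\hat{Q}^\T A\hat{Q}$ is integral. Because $Q$ is generated from $v$, each column of $\hat{Q}$ is a multiple of $v$ modulo $p$, so I would write $\hat{Q}=vw^\T+pR$ with $R$ integral and $w=(w_1,\ldots,w_n)^\T$ recording the multipliers. Two preliminary facts are needed. First, $p\mid v^\T v$: since $\ell(Q)=p$, some column $\alpha_j$ of $\hat{Q}$ is nonzero modulo $p$, and reducing $\alpha_j^\T\alpha_j=p^2$ modulo $p$ gives $w_j^2(v^\T v)\equiv 0\pmod p$ with $w_j\not\equiv 0$. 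Second, $Av\equiv\lambda v\pmod p$ for some integer $\lambda$: from $W^\T v\equiv 0\pmod p$ and the Cayley--Hamilton theorem one gets $v^\T A^ke\equiv 0\pmod p$ for all $k\ge 0$, hence $W^\T(Av)\equiv 0\pmod p$, and the solution space of $W^\T z\equiv 0\pmod p$ is one-dimensional because $\rank_p W=n-1$.

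The main reduction comes next. Writing $v^\T v=pa'$ and $Av=\lambda v+p\mu$ ($a'\in\mathbb{Z}$, $\mu$ integral), I expand $\hat{Q}^\T A\hat{Q}=(v^\T Av)ww^\T+p\bigl(w(v^\T AR)+(R^\T Av)w^\T\bigr)+p^2R^\T AR$ and substitute. Modulo $p^2$, the terms carrying the factor $\lambda$ combine into $p\lambda\bigl(a'ww^\T+w(v^\T R)+(R^\T v)w^\T\bigr)$, and this vanishes modulo $p^2$: dividing the orthogonality relation $\hat{Q}^\T\hat{Q}=p^2I$ by $p$ shows $a'ww^\T+w(v^\T R)+(R^\T v)w^\T\equiv 0\pmod p$. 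What remains is $\hat{Q}^\T A\hat{Q}\equiv p\,(v^\T\mu)\,ww^\T\pmod{p^2}$, so the whole lemma reduces to the single congruence $v^\T\mu\equiv 0\pmod p$, equivalently $v^\T Av\equiv\lambda v^\T v\pmod{p^2}$.

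Proving that last congruence is the main obstacle, and it is precisely here that $p^2\mid\det W$ (not merely $p\mid\det W$) is indispensable: by Lemmas \ref{pl} and \ref{simcld} the conclusion genuinely fails when $p\mid\mid\det W$. My plan is to first improve the choice of $v$. By Remark \ref{vcv}, $Q$ is generated from every integral vector that is a nonzero multiple of $v$ modulo $p$, so I am free to replace $v$. Write $W=UDV$ for the Smith normal form with $U,V$ unimodular; the hypotheses $\rank_p W=n-1$ and $p^2\mid\det W$ force exactly one invariant factor to be divisible by $p$, namely $d_n$, with $p^2\mid d_n$. The last row of $U^{-1}W=DV$ is $d_n$ times the last row of $V$, so the integral vector $v_0$ equal to the transpose of the last row of $U^{-1}$ (which is $\not\equiv 0\pmod p$, being a row of a unimodular matrix) satisfies $W^\T v_0\equiv 0\pmod{p^2}$ and is a nonzero multiple of $v$ modulo $p$. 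After replacing $v$ by $v_0$ I may assume $W^\T v\equiv 0\pmod{p^2}$; Cayley--Hamilton then gives $v^\T A^ke\equiv 0\pmod{p^2}$ for all $k$, so $W^\T(Av)\equiv 0\pmod{p^2}$. Finally, the same Smith normal form analysis shows that the solutions of $W^\T z\equiv 0\pmod{p^2}$ form a cyclic $\mathbb{Z}/p^2\mathbb{Z}$-module generated by $v$, so $Av\equiv c'v\pmod{p^2}$ for some integer $c'$; comparison modulo $p$ gives $c'\equiv\lambda\pmod p$, so $\lambda':=(c'-\lambda)/p$ is an integer, $\mu\equiv\lambda'v\pmod p$, and hence $v^\T\mu\equiv\lambda'(v^\T v)\equiv 0\pmod p$ because $p\mid v^\T v$. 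This completes the argument.
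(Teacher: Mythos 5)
The paper does not prove Lemma~\ref{mustint}; it is imported from \cite{wang2019LAA} without proof, so there is no in-paper argument to compare against. Judged on its own, your proof is correct and complete. The decomposition $\hat{Q}=vw^\T+pR$, the two preliminary facts ($p\mid v^\T v$ from a nonzero column of $\hat Q$ having squared length $p^2$, and $Av\equiv\lambda v\pmod p$ from Cayley--Hamilton plus the one-dimensionality of the kernel of $W^\T$ mod $p$), and the cancellation of the $\lambda$-terms via $\hat Q^\T\hat Q=p^2I$ all check out, correctly reducing the claim to $v^\T Av\equiv\lambda v^\T v\pmod{p^2}$. The decisive step --- replacing $v$ by the last row of $U^{-1}$ from the Smith normal form $W=UDV$, so that $W^\T v\equiv 0\pmod{p^2}$ and the mod-$p^2$ solution module is cyclic, generated by $v$ --- is exactly where the hypothesis $p^2\mid d_n$ enters, and your use of Remark~\ref{vcv} to justify the replacement is legitimate since the conclusion concerns only $Q$. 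This appears to be essentially the same circle of ideas as in \cite{wang2019LAA} (lifting the eigenvector relation from mod $p$ to mod $p^2$ via the Smith normal form), but your write-up is a valid self-contained proof regardless.
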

Now we can show that the necessary condition for $G$ to have a generalized cospectral mate is also sufficient.
\begin{theorem}\label{gcm_pvec}
	Let $G\in \mathcal{F}_n^p$. Then  $G$ is not DGS if and only if any nontrivial solution to $W^\T z\equiv 0\pmod{p}$ can generate some primitive matrix.
\end{theorem}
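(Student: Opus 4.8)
The forward implication is precisely Proposition~\ref{gcmp}, so the work lies entirely in the converse: assuming some nontrivial solution of $W^\T z\equiv 0\pmod p$ generates a primitive matrix, we must exhibit a generalized cospectral mate of $G$. First I would observe that since $G\in\mathcal F_n^p$ the walk matrix $W$ is invertible, so $G$ is controllable, and $\rank_p W=n-1$; hence the solution space of $W^\T z\equiv 0\pmod p$ is one-dimensional over $\mathbb F_p$, so by Remark~\ref{vcv} the words ``some'' and ``any'' in the statement are interchangeable. Fix a nontrivial solution $v$ and a primitive matrix $Q$ generated from $v$, and set $B:=Q^\T A(G)Q$; the plan is to show that $B$ is the adjacency matrix of a graph $H$ that is generalized cospectral with, but not isomorphic to, $G$.

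Since $G$ is controllable, $p^2\mid\det W$ and $\rank_p W=n-1$, Lemma~\ref{mustint} applies and gives that $B$ is an integral matrix; it is clearly symmetric. Moreover $Q$ is regular, so $Q^\T J Q=J$ and therefore $J-I-B=Q^\T\bigl(J-I-A(G)\bigr)Q$. Thus $B$ is similar to $A(G)$ and $J-I-B$ is similar to $J-I-A(G)$, which is exactly to say that $B$ and $A(G)$ are generalized cospectral as symmetric matrices. Now Lemma~\ref{simcld} forces $B$ to be an adjacency matrix, say $B=A(H)$, and since $Q^\T A(G)Q=A(H)$ with $Q$ regular orthogonal, the Johnson--Newman theorem shows that $G$ and $H$ are generalized cospectral.

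It remains to rule out $H\cong G$. Suppose to the contrary that $A(H)=P^\T A(G)P$ for some permutation matrix $P$. Combining this with $Q^\T A(G)Q=A(H)$ gives $S^\T A(G)S=A(G)$ for $S:=QP^\T$, and $S$ is a regular rational orthogonal matrix (regular because $Se=QP^\T e=Qe=e$). Applying the uniqueness in the Johnson--Newman/Wang lemma with the target graph taken to be $G$ itself — equivalently, using that a controllable graph has trivial automorphism group — we get $S=I$, i.e.\ $Q=P$ is a permutation matrix. But a permutation matrix has level $1$, contradicting the fact that a primitive matrix has level $p>1$. Hence $H$ is a generalized cospectral mate of $G$, so $G$ is not DGS, completing the proof.

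The only step requiring real input beyond bookkeeping is the integrality of $Q^\T A(G)Q$, which is Lemma~\ref{mustint}; the rest is assembling pieces already available. I expect the points to watch are (i) correctly using $Qe=e$ to transport the complement so that $B$ is genuinely generalized cospectral with $A(G)$ before invoking Lemma~\ref{simcld}, and (ii) making sure $Q$ has level exactly $p$ and not $1$, which is built into the definition of a primitive matrix.
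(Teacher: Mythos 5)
Your proof is correct and follows essentially the same route as the paper: integrality of $Q^\T A(G)Q$ via Lemma~\ref{mustint}, then the $0$-$1$ property via Lemma~\ref{simcld}. The one addition is your explicit argument ruling out $H\cong G$ (via uniqueness of the regular rational orthogonal matrix and the fact that $\ell(Q)=p>1$), a step the paper leaves implicit but which your treatment handles correctly.
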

\begin{proof}
	It suffices to show the sufficiency part. Let $v$ be a nontrivial solution to $W^\T v\equiv 0\pmod{p}$. Let $Q$ be a primitive matrix generated from $v$. Clearly, $G$ satisfies the condition of Lemma \ref{mustint}. Thus, $Q^\T A Q$ is an integral matrix. Note that  $Q^\T A Q$ is generalized cospectral with $A$.  It follows from  Lemma \ref{simcld} that $Q^\T A Q$ is the adjacency matrix of some graph, say $H$. Thus, $G$ is not DGS. This proves the theorem.
\end{proof}
	
\section{Finding generalized cospectral mates}
We shall develop an algorithm to determine whether a given graph $G\in \mathcal{F}_n^p$ is DGS. And when the graph $G$ is not DGS, the algorithm will find its (unique) generalized cospectral mate. The overall idea is based on Theorem \ref{gcm_pvec}. We pick an arbitrary nontrivial solution $v$ of $W^\T(G)z\equiv 0\pmod{p}$ and try to generate a primitive matrix.  By Lemma \ref{qdiag}, it suffices to consider the restricted case that every entry of $v$ is nonzero modulo $p$. Indeed, for general $v$, we use $v^*$ to denote the vector obtained from $v$ by deleting the zero entries. Then it is easy to see that $v$ can generate a primitive matrix if and only $v^*$ can do so. To generate a primitive matrix $Q$ from $v$ with $t$ zero entries, we first use $v^*$ to generate a  primitive matrix $Q_0$ of order $n-t$. Then we can obtain an $n\times n$ primitive matrix $Q$ from $Q_0$ by adding $t$ 1's and appropriate number of 0's naturally
as in Example \ref{np4}.

\subsection{Criterion for a vector to generate a primitive matrix}
The following lemma gives a simple condition for an integral vector to generate some primitive matrix. It essentially appeared in \cite{wang2006EuJC}. We include its short proof here.
\begin{lemma}\cite{wang2006EuJC}
	Let $v$ be an $n$-dimensional integral vector. If $v$ can generated some primitive matrix $Q$, then $v^\T e\equiv 0\pmod{p}$ and $v^\T v\equiv 0\pmod{p}$.
\end{lemma}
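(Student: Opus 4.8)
The plan is to work directly with $\hat{Q} := pQ$ and to exploit two things: that $\hat{Q}$ has rank one over $\mathbb{F}_p$ with column space spanned by $v$, and that the orthogonality and regularity of $Q$ translate into trivial congruences. Write $\alpha_1,\ldots,\alpha_n$ for the columns of $\hat{Q}$. Since $Q$ is generated from $v$, each column satisfies $\alpha_j\equiv c_j v\pmod{p}$ for suitable integers $c_j$; and since $\rank_p(\hat{Q})=1$, not all of the $\alpha_j$ vanish modulo $p$, so I may fix an index $j$ with $c_j\not\equiv 0\pmod{p}$, i.e.\ $c_j$ is a unit in $\mathbb{F}_p$.

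For the first congruence I would use regularity of $Q$ in the form $Q^\T e=e$, which gives $\hat{Q}^\T e=pe\equiv 0\pmod{p}$. Reading off the $j$-th coordinate of this vector yields $\alpha_j^\T e\equiv 0\pmod{p}$, hence $c_j\,(v^\T e)\equiv 0\pmod{p}$; cancelling the unit $c_j$ gives $v^\T e\equiv 0\pmod{p}$. For the second, I would use orthogonality $Q^\T Q=I$, i.e.\ $\hat{Q}^\T\hat{Q}=p^2I$. Its $(j,j)$ entry is $\alpha_j^\T\alpha_j=p^2\equiv 0\pmod{p}$, so $c_j^2\,(v^\T v)\equiv 0\pmod{p}$, and cancelling the unit $c_j^2$ gives $v^\T v\equiv 0\pmod{p}$.

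There is essentially no serious obstacle here: the whole argument is a two-line computation once one notices that ``generated from $v$'' forces the columns of $pQ$ to be scalar multiples of $v$ modulo $p$, and that rank one guarantees at least one of these scalars is invertible mod $p$. The only point requiring a little care is to invoke the correct half of regularity, namely $Q^\T e=e$ rather than $Qe=e$, since it is the columns of $pQ$ (not its rows) that are controlled by $v$.
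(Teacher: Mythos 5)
Your argument is correct and is essentially the paper's own proof: both pick a column $u=\alpha_j$ of $pQ$ that is a nonzero (hence unit) multiple of $v$ modulo $p$, then read off $u^\T e=p$ from regularity and $u^\T u=p^2$ from orthogonality and cancel the unit scalar. The only cosmetic difference is that the paper inverts the scalar to write $v\equiv du\pmod{p}$ before computing, whereas you cancel $c_j$ afterwards; also note the paper already records that $Qe=e$ and $Q^\T e=e$ are equivalent for orthogonal matrices, so the point you flag as needing care is automatic.
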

\begin{proof}
	Let $\hat{Q}=pQ$ and $u$ be a column of $\hat{Q}$ such that $u\not\equiv 0\pmod{p}$. By the condition of this lemma, there exists an integer $c$ such that $u\equiv cv\pmod{p}$. As $u\not\equiv 0\pmod{p}$, we must have $c\not\equiv 0\pmod{p}$. Let $d$ be an integer such that $cd \equiv 1\pmod{p}$. Then we have $v\equiv du\pmod{p}$. Noting that  $\hat{Q}^\T e=pQ^\T e=pe$ and $\hat{Q}^\T \hat{Q}=p^2I$, we have $u^\T e=p$ and  $u^\T u=p^2$. Thus, $v^\T e\equiv du^\T e\equiv 0\pmod{p}$ and  $v^\T v\equiv d^2 u^\T u\equiv 0\pmod{p}$. This proves the lemma.
	\end{proof}
\begin{definition}\label{ppr}\textup{
	For two integral vectors $v$ and $w$, we call $w$  a \emph{perfect $p$-representative} of $v$ if $w$ satisfies (\rmnum{1}) $w\equiv v\pmod{p}$, (\rmnum{2})  $w^\T e=p$, and (\rmnum{3}) $w^\T w=p^2$.}
\end{definition}
\begin{proposition}\label{do}
	Let $v$ be an integral vector with each entry nonzero modulo $p$. Let $c_1$ and $c_2$ be two distinct integers in $\{1,2,\ldots,p-1\}$. Then the followings hold:\\	
	(\rmnum{1}) Any two  distinct perfect $p$-representatives  $u_1$ and $u_2$ of $c_1 v$ are orthogonal in $\mathbb{R}^n$. \\	
	(\rmnum{2}) Any two perfect $p$-representatives  $w_1$ and $w_2$ of $c_1v$ and $c_2v$ respectively are distinct and orthogonal in $\mathbb{R}^n$.
\end{proposition}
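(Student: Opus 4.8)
The plan is to deduce both statements directly from Lemma \ref{ort}, which already does the arithmetic work (the congruence $u^\T v\equiv 0\pmod{p^2}$ plus Cauchy--Schwarz); the only thing left is to check its three hypotheses in each case. First I would record the observation common to both parts: since every entry of $v$ is nonzero modulo $p$ and $c_1,c_2\in\{1,2,\ldots,p-1\}$ are units in $\mathbb{F}_p$, every entry of $c_1v$ and of $c_2v$ is nonzero modulo $p$, so by condition (\rmnum{1}) of Definition \ref{ppr} every entry of each of $u_1,u_2,w_1,w_2$ is nonzero modulo $p$. Moreover, by condition (\rmnum{3}) of Definition \ref{ppr}, each of these vectors $x$ satisfies $x^\T x=p^2$. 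This already supplies two of the three hypotheses of Lemma \ref{ort}; in each part it then remains to verify linear dependence over $\mathbb{F}_p$ and the condition $x\neq\pm y$.

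For part (\rmnum{1}), condition (\rmnum{1}) of Definition \ref{ppr} gives $u_1\equiv c_1v\equiv u_2\pmod{p}$, so $u_1$ and $u_2$ are linearly dependent over $\mathbb{F}_p$; by hypothesis $u_1\neq u_2$, and $u_1=-u_2$ is impossible because $u_1^\T e=u_2^\T e=p\neq 0$ would force $p=-p$. Hence $u_1\neq\pm u_2$, and Lemma \ref{ort} (with $u=u_1$, $v=u_2$) yields $u_1^\T u_2=0$. For part (\rmnum{2}), we have $w_1\equiv c_1v$ and $w_2\equiv c_2v\pmod{p}$, so $c_2w_1\equiv c_1c_2v\equiv c_1w_2\pmod{p}$; since $c_1,c_2\not\equiv 0\pmod{p}$ this is a nontrivial $\mathbb{F}_p$-linear relation, so $w_1$ and $w_2$ are linearly dependent over $\mathbb{F}_p$. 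If $w_1=w_2$ then $(c_1-c_2)v\equiv 0\pmod{p}$, which forces $c_1\equiv c_2\pmod{p}$, contradicting $c_1\neq c_2$ in $\{1,\ldots,p-1\}$; this proves the asserted distinctness. And $w_1=-w_2$ is again impossible since $w_1^\T e=w_2^\T e=p$. Thus $w_1\neq\pm w_2$, and Lemma \ref{ort} gives $w_1^\T w_2=0$.

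The step requiring the most care — really the only delicate point — is the verification of $\mathbb{F}_p$-linear dependence in part (\rmnum{2}): one must exhibit the explicit relation $c_2w_1-c_1w_2\equiv 0\pmod{p}$ and observe that both coefficients are units in $\mathbb{F}_p$, since a relation with a vanishing coefficient would not allow us to invoke Lemma \ref{ort}. Everything else is a routine substitution into Definition \ref{ppr} and Lemma \ref{ort}, using only that $p$ is an odd prime (so $p\neq -p$) and that the entries of $v$ are units modulo $p$.
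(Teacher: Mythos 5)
Your proposal is correct and follows essentially the same route as the paper: both reduce each part to Lemma \ref{ort} by verifying its three hypotheses from Definition \ref{ppr}, using $x^\T e = p$ to rule out $x = -y$ and the unit coefficients $c_1, c_2$ to get the $\mathbb{F}_p$-linear dependence and the distinctness in part (\rmnum{2}). Your write-up is in fact slightly more explicit than the paper's (which dispatches part (\rmnum{2}) with ``a similar argument''), but there is no substantive difference.
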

\begin{proof}
Note that $u_1\equiv u_2\equiv c_1v\pmod{p}$. The assumptions on $v$ and $c_1$ imply that each entry of $u_1$ and $u_2$ is nonzero modulo $p$. 	As $u_1\equiv u_2\pmod{p}$, we see that $u_1$ and $u_2$ are clearly linearly dependent over $\mathbb{F}_p$. Since $u_1^\T e=u_2^\T e=p$, we must have $u_1\neq -u_2$ and hence $u_1\neq \pm u_2$ as $u_1$ and $u_2$ are distinct. Noting that $u_1^\T u_1=u_2^\T u_2=p^2$ and using Lemma \ref{ort} we have $u_1^\T u_2=0$. This proves (\rmnum{1}).

By the assumptions on $v$,  $c_1$ and $c_2$, we see that $c_1v\not\equiv c_2v\pmod{p}$. Noting that $w_1\equiv c_1v$ and $w_2\equiv c_2v$, we have $w_1\not\equiv w_2$ and hence $w_1\neq w_2$. Now, (\rmnum{2}) holds by a  similar argument  as in (\rmnum{1}).
\end{proof}
\begin{theorem}
	Let $v$ be an $m$-dimensional integral vector with each entry nonzero modulo $p$. For each $k\in \{1,2,\ldots,p-1\}$, let $\mathcal{R}_k$ denote the collection of all perfect $p$-representatives of $kv$. Then $v$ can generate a primitive matrix if and only if $\sum_{k=1}^{p-1}|\mathcal{R}_k|=m$.
\end{theorem}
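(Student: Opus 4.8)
The plan is to show the two columns of the equivalence separately, and the key bookkeeping fact is that a primitive matrix $Q$ generated from $v$ is, by Remark~\ref{intfree}, entirely nonintegral, so every column $\alpha_i$ of $pQ$ satisfies $\alpha_i^\T e = p$, $\alpha_i^\T\alpha_i = p^2$, and $\alpha_i \equiv k_i v \pmod p$ for some $k_i \in \{1,\ldots,p-1\}$ (it cannot be $0$, else that column is integral). In other words, each column of $pQ$ is a perfect $p$-representative of exactly one of the vectors $v, 2v, \ldots, (p-1)v$. This immediately gives the \emph{only if} direction: if $v$ generates a primitive matrix $Q$, then the $m$ columns of $pQ$ are $m$ \emph{distinct} vectors (columns of an invertible matrix) distributed among the sets $\mathcal{R}_1,\ldots,\mathcal{R}_{p-1}$, so $m \le \sum_{k=1}^{p-1}|\mathcal{R}_k|$; conversely, any perfect $p$-representative $w$ of $kv$ has norm $p^2$ and $w\equiv kv \not\equiv 0$, and by Proposition~\ref{do} any two such representatives — whether belonging to the same $\mathcal{R}_k$ or to different ones — are orthogonal in $\mathbb{R}^n$, so the union $\bigcup_{k=1}^{p-1}\mathcal{R}_k$ is an orthogonal set of nonzero vectors in $\mathbb{R}^m$, forcing $\sum_{k=1}^{p-1}|\mathcal{R}_k| \le m$. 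Hence equality $\sum_{k=1}^{p-1}|\mathcal{R}_k| = m$.

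For the \emph{if} direction, suppose $\sum_{k=1}^{p-1}|\mathcal{R}_k| = m$. By the orthogonality just noted, $\bigcup_k \mathcal{R}_k$ is a set of $m$ pairwise orthogonal nonzero vectors in $\mathbb{R}^m$, hence an orthogonal basis, and each has norm exactly $p$; normalizing, $\frac1p$ times the matrix $M$ whose columns are these $m$ vectors is an orthogonal matrix. It remains to check that $\frac1p M$ lies in the right class: every column $w$ of $M$ satisfies $w^\T e = p$, so $M^\T e = pe$, i.e.\ $\frac1p M$ is regular; it is clearly rational; its level divides $p$, and since the columns are nonintegral the level is exactly $p$; and $\operatorname{rank}_p(M) = 1$ because every column of $M$ is a nonzero multiple of $v$ modulo $p$ (indeed $\equiv kv$). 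Thus $Q := \frac1p M$ is a primitive matrix, and by construction each column of $pQ = M$ is a multiple of $v$ over $\mathbb{F}_p$, so $Q$ is generated from $v$. This completes the equivalence.

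The one point needing a little care — and the place I expect the main (though still modest) obstacle — is the claim in the \emph{only if} direction that the columns of $pQ$ are \emph{exactly} the elements of $\bigcup_k \mathcal{R}_k$, not merely a subset: if some $w \in \mathcal{R}_k$ were not a column of $pQ$, then together with the $m$ columns of $pQ$ we would have $m+1$ pairwise orthogonal nonzero vectors in $\mathbb{R}^m$ (using Proposition~\ref{do} again to get orthogonality of $w$ with each column), which is impossible; this is really just the counting identity $\sum_k|\mathcal{R}_k| \le m$ run in reverse once we know the columns of $pQ$ land in the $\mathcal{R}_k$'s and are distinct. One should also note at the outset that the hypothesis ``each entry of $v$ nonzero modulo $p$'' is exactly what Remark~\ref{intfree} needs, and that a column of $pQ$ congruent to $0$ is ruled out for the same reason (its entries would be nonzero, contradiction with it being the zero vector mod $p$) — so all $n$ columns genuinely contribute perfect $p$-representatives. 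The rest is the routine verification, recorded above, that the orthogonal basis we assemble really is a regular rational orthogonal matrix of level $p$ with $\operatorname{rank}_p$ equal to one.
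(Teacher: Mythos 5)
Your proposal is correct and follows essentially the same route as the paper: Remark \ref{intfree} forces every column of $pQ$ to be a perfect $p$-representative of some $kv$ (giving $\sum_k|\mathcal{R}_k|\ge m$), Proposition \ref{do} gives disjointness and pairwise orthogonality of $\bigcup_k\mathcal{R}_k$ (giving $\le m$), and the converse assembles the $m$ representatives into the matrix $\tfrac1p\hat{Q}$ and verifies it is primitive and generated from $v$. Your write-up merely spells out the verification that the paper leaves as "we can check."
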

\begin{proof}
	Suppose $Q$ is a primitive matrix generated from $v$. Since every entry of $v$ is nonzero (mod $p$), we see from Remark \ref{intfree} that $Q$ contains no integral entries. Let $z$ be any column of $pQ$. Then each entry of $z$ is nonzero modulo $p$ and of course $z\not\equiv 0\pmod{p}$. Thus $z\equiv kv\pmod{p}$ for $k\in \{1,2,\ldots,p-1\}$. Consequently, by the regularity and orthogonality of $Q$, we find that $z$ is a perfect $p$-representative of $kv$, i.e., $z\in \mathcal{R}_k$. Thus, we have $\sum_{k=1}^{p-1}|\mathcal{R}_k|\ge m$. By Proposition \ref{do},  all these $p-1$ sets $\mathcal{R}_k$'s are disjoint and any two vectors in $\cup_{k=1}^{p-1}\mathcal{R}_k$ are orthogonal in $\mathbb{R}^m$. Thus the strict inequality $\sum_{k=1}^{p-1}|\mathcal{R}_k|> m$ can never hold and hence $\sum_{k=1}^{p-1}|\mathcal{R}_k|=m$.
	
Suppose $\sum_{k=1}^{p-1}|\mathcal{R}_k|=m$. We construct an integral matrix $\hat{Q}$ using all the $m$ vectors in $\cup_{k=1}^{p-1}\mathcal{R}_k$. Using Definition \ref{ppr} and Proposition  \ref{do}, we can check that  $\frac{1}{p}\hat{Q}$ is a primitive matrix and $\frac{1}{p}\hat{Q}$ is generated by $v$.
\end{proof}
\subsection{Constructing all perfect $p$-representatives}
\begin{definition}\normalfont
		For two integral vectors $v$ and $w$, we call $w$  the \emph{shortest $p$-representative} of $v$ if  $w\equiv v\pmod{p}$ and $|w_i|\le \frac{p-1}{2}$ for each entry $w_i$ of $w$.
\end{definition}
\begin{remark}\normalfont
For a given integral vector $v$, there may be none, unique or many perfect $p$-representatives of $v$. Nevertheless, the shortest $p$-representative of $v$ always exists and is unique. Also note that the shortest $p$-representative of $v$ has the shortest Euclidian length among all vectors that are congruent to $v$ modulo $p$.
\end{remark}
\begin{example}\normalfont
Let  $n=6$, $p=3$,
$$v=\left(\begin{matrix}
2\\2\\2\\1\\1\\1\end{matrix}\right)
\quad\text{and}\quad
\hat{Q}=\left(\begin{matrix}
2&-1&-1&1&1&1\\
-1&2&-1&1&1&1\\
-1&-1&2&1&1&1\\
1&1&1&2&-1&-1\\
1&1&1&-1&2&-1\\
1&1&1&-1&-1&2
\end{matrix}\right).$$
Then the shortest $3$-representative of $v$ is $(-1,-1,-1,1,1,1)^\T$. All the first 3 columns of $\hat{Q}$ are perfect  $3$-representatives of $v$, while the remaining three columns are perfect  $3$-representatives of $2v$.
\end{example}
The next lemma indicates that all perfect $p$-representatives of a vector $v$ are very close to its shortest $p$-representative in the sense of Hamming distance. Recall that the Hamming distance of two vectors are the number of positions in which they differ.
\begin{lemma}\label{atm3}
	 For an integer vector $v$ with each entry nonzero modulo $p$, let $w$ be a perfect $p$-representative and $u$ be the shortest $p$-representative of $v$. Then the Hamming distance of $w$ and $u$ is at most 3. Moreover, for any index $i$ such that $w_i\neq u_i$, either (\rmnum{1}) $w_i=u_i+p$ and $u_i< 0$, or (\rmnum{2}) $w_i=u_i-p$ and $u_i>0$.
\end{lemma}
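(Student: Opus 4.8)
The plan is to compare $w$ with $u$ one coordinate at a time, using only two features of a perfect $p$-representative: that $w\equiv v\pmod p$ and that $w^\T w=p^2$ (the condition $w^\T e=p$ will not be needed). Since $w\equiv u\equiv v\pmod p$, I would first write $w_i=u_i+pk_i$ with $k_i\in\mathbb{Z}$, so that $w_i\neq u_i$ exactly when $k_i\neq 0$. The hypothesis that every entry of $v$ is nonzero modulo $p$ gives $1\le|u_i|\le\frac{p-1}{2}$ for every $i$: the upper bound is the defining property of the shortest $p$-representative, and the lower bound holds because $u_i\equiv v_i\not\equiv 0\pmod p$ while $|u_i|<p$. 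Finally, since $w^\T w=\sum_j w_j^2=p^2$ is a sum of nonnegative integers, every individual coordinate satisfies $w_i^2\le p^2$, i.e. $|w_i|\le p$; this per-coordinate bound is the real engine of the argument.

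Next I would pin down $k_i$ whenever $k_i\neq 0$. If $|k_i|\ge 2$ then $|w_i|\ge p|k_i|-|u_i|\ge 2p-\frac{p-1}{2}=\frac{3p+1}{2}>p$, contradicting $|w_i|\le p$; hence $|k_i|=1$. Therefore $w_i\neq u_i$ forces $w_i=u_i+p$ or $w_i=u_i-p$. In the first case, if $u_i>0$ then $w_i=u_i+p>p$, impossible, so (as $u_i\neq 0$) we get $u_i<0$; symmetrically, in the second case $u_i>0$. This is exactly the ``moreover'' clause.

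To bound the Hamming distance, note that for any $i$ with $w_i\neq u_i$ we now have $|k_i|=1$, so by the reverse triangle inequality $|w_i|=|u_i\pm p|\ge p-|u_i|\ge p-\frac{p-1}{2}=\frac{p+1}{2}$, whence $w_i^2\ge\frac{(p+1)^2}{4}$. Writing $s$ for the number of such indices and using once more that $\sum_j w_j^2=p^2$ is a sum of nonnegative terms,
\[
p^2=\sum_j w_j^2\ \ge\ \sum_{i:\,w_i\neq u_i}w_i^2\ \ge\ s\cdot\frac{(p+1)^2}{4},
\]
so $s\le\frac{4p^2}{(p+1)^2}<4$, i.e. $s\le 3$.

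The argument is essentially a short chain of elementary inequalities, so there is no genuine obstacle. The only point deserving care is to recognize that the statement is about individual coordinates, so the right consequence to extract from $w^\T w=p^2$ is the pointwise bound $|w_i|\le p$; with that in place, the hypothesis $p\nmid v_i$ is precisely what upgrades ``$|k_i|=1$'' to the signed dichotomy.
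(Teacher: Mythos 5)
Your proof is correct and follows essentially the same route as the paper's: lower-bound $|w_i|$ by $\frac{p+1}{2}$ at every coordinate where $w_i\neq u_i$, use $w^\T w=p^2$ to cap the number of such coordinates at $3$, and use the pointwise bound on $|w_i|$ together with $u_i\neq 0$ to force $k_i=\pm 1$ with the stated sign dichotomy. The paper merely compresses the ``moreover'' part into a one-line remark, so your write-up is just a more explicit version of the same argument.
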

\begin{proof}
	Let $i$ be an index such that $w_i\neq u_i$. As $w_i\equiv u_i\pmod{p}$ and $|u_i|\le \frac{p-1}{2}$, it is not difficult to see that $|w_i|\ge \frac{p+1}{2}$ and consequently, $w^2_i>\frac{p^2}{4}$. Therefore, there are at most 3 different such indices as $w^\T w=p^2$.  This proves the first part of this lemma. Note that $w_i=u_i+kp$ for some integer $k$. It is easy to verify the  remaining part using the obvious restriction that $|w_i|<p$.
\end{proof}
  The following proposition is immediate from Lemma \ref{atm3}.
\begin{proposition}
	 For an integer vector $v$ with each entry nonzero modulo $p$,  let $u$ be the shortest $p$-representative of $v$. If $v$ has at least one perfect $p$-representative, then  $|u^\T e-p|\le 3p$ and $u^\T u\le p^2$.
\end{proposition}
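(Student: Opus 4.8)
The plan is to obtain both inequalities as direct consequences of Lemma \ref{atm3}, with essentially no further work. Fix a perfect $p$-representative $w$ of $v$ (one exists by hypothesis), and let $u$ be the unique shortest $p$-representative of $v$. Since $w\equiv v\equiv u\pmod p$ and each entry of $v$, hence of $u$, is nonzero modulo $p$, Lemma \ref{atm3} applies: the index set $D=\{i\colon w_i\neq u_i\}$ satisfies $|D|\le 3$, and $w_i-u_i=\pm p$ for every $i\in D$.

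First I would bound $u^\T u$. For each index $i$ one has $w_i\equiv u_i\pmod p$ together with $|u_i|\le\frac{p-1}{2}$, and among all integers in a fixed residue class modulo $p$ the one of absolute value at most $\frac{p-1}{2}$ is the one of smallest absolute value; hence $|u_i|\le|w_i|$ for every $i$. Summing the squares and using that $w$ is a perfect $p$-representative gives $u^\T u=\sum_i u_i^2\le\sum_i w_i^2=w^\T w=p^2$.

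Next I would bound $u^\T e$. Since $u_i=w_i$ for $i\notin D$, we have $u^\T e-w^\T e=\sum_{i\in D}(u_i-w_i)$, a sum of at most three terms each equal to $\pm p$; thus $|u^\T e-w^\T e|\le 3p$. As $w^\T e=p$, this yields $|u^\T e-p|\le 3p$, completing the argument. The hard part is really already behind us, inside Lemma \ref{atm3}: its driving mechanism is that any coordinate where $w$ and $u$ disagree forces $w_i^2>p^2/4$, so at most three such coordinates can coexist under the constraint $w^\T w=p^2$. The only point to watch in the present deduction is the elementary fact that the shortest $p$-representative is coordinatewise minimal in absolute value, not merely minimal in Euclidean norm — this is precisely what lets us pass from $w^\T w=p^2$ to the coordinatewise bounds $|u_i|\le|w_i|$ needed above.
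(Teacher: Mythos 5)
Your proof is correct and follows exactly the route the paper intends: the paper simply declares the proposition ``immediate from Lemma \ref{atm3},'' and your argument is the straightforward fleshing-out of that deduction (the bound $|u^\T e-p|\le 3p$ from the Hamming-distance-at-most-3 statement with $\pm p$ shifts, and $u^\T u\le p^2$ from the coordinatewise minimality $|u_i|\le|w_i|$ of the shortest representative). No gaps; nothing further to add.
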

Using the entry sum of the shortest $p$-representative of a vector $v$, we can know more about its perfect $p$-representatives. Let $e_i$ denote the $i$-th standard unit vector in $\mathbb{R}^n$.
\begin{lemma}\label{adm}
	 For an integer vector $v$  with each entry nonzero modulo $p$, let $u$ be the shortest $p$-representative of $v$. Suppose that $u^\T e-p=sp$ for some $s\in \{-3,-2,\ldots,3\}$ and $u^\T u\le p^2$. Then any perfect $p$-representative $w$ of $v$ can be written as
	 \begin{equation}\label{wu}
	w=u+\sum_{i\in I}pe_i-\sum_{j\in D}pe_j,
	 \end{equation}
	 where $I$ and $D$ are disjoint (possibly empty) subsets of $\{1,2,\ldots,n\}$ satisfying the following conditions:\\
	 (\rmnum{1}) $|I|+|D|\le 3$;\\
	 (\rmnum{2}) $|D|-|I|=s$;\\
	 (\rmnum{3})  $u_i<0$ for each $i\in I$ and $u_j>0$ for $j\in D$; and\\
	 (\rmnum{4}) $\sum_{k\in I\cup D}|u_k|=\frac{1}{2p}u^\T u+\frac{p}{2}(|I|+|D|-1).$
\end{lemma}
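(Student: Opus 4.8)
The plan is to construct the index sets $I$ and $D$ directly from Lemma \ref{atm3} and then extract conditions (\rmnum{1})--(\rmnum{4}) one at a time from the three defining properties $w\equiv v\pmod p$, $w^\T e=p$ and $w^\T w=p^2$ of a perfect $p$-representative.

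First I would set $I=\{i\colon w_i\neq u_i\text{ and }u_i<0\}$ and $D=\{j\colon w_j\neq u_j\text{ and }u_j>0\}$. Since $v$ (and hence $u$) has every entry nonzero modulo $p$, Lemma \ref{atm3} applies and tells us that for every index with $w_i\neq u_i$ we are in exactly one of the cases $w_i=u_i+p$ with $u_i<0$, or $w_i=u_i-p$ with $u_i>0$. Hence $I$ and $D$ are disjoint, their union is precisely the set of coordinates where $w$ and $u$ differ, and $w$ has the shape \eqref{wu}. Condition (\rmnum{3}) then holds by construction, and condition (\rmnum{1}) holds because $|I|+|D|$ equals the Hamming distance between $w$ and $u$, which Lemma \ref{atm3} bounds by $3$.

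Next I would read off (\rmnum{2}) from $w^\T e=p$: applying $e^\T(\cdot)$ to \eqref{wu} gives $w^\T e=u^\T e+p|I|-p|D|$, and substituting the hypothesis $u^\T e=(1+s)p$ yields $|D|-|I|=s$ at once. Finally, for (\rmnum{4}) I would compute $w^\T w$ from \eqref{wu}: a direct expansion, using that the perturbations $\pm p e_k$ are supported on distinct coordinates, gives
\[
w^\T w=u^\T u+2p\Bigl(\sum_{i\in I}u_i-\sum_{j\in D}u_j\Bigr)+p^2(|I|+|D|).
\]
Because $u_i<0$ on $I$ and $u_j>0$ on $D$, the middle term equals $-2p\sum_{k\in I\cup D}|u_k|$; imposing $w^\T w=p^2$ and dividing by $2p$ then gives exactly the identity in (\rmnum{4}). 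I do not expect a genuine obstacle: the argument is pure bookkeeping once Lemma \ref{atm3} is in hand, the only subtlety being the sign accounting that converts each $u_k$ into $|u_k|$, which is precisely what the case analysis in Lemma \ref{atm3} supplies. (Note the hypothesis $u^\T u\le p^2$ is not used in the derivation itself; it is recorded here only because, by the preceding proposition, it holds automatically whenever $v$ admits a perfect $p$-representative.)
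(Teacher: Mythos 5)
Your proposal is correct and follows essentially the same route as the paper: invoke Lemma \ref{atm3} to obtain the decomposition \eqref{wu} together with (\rmnum{1}) and (\rmnum{3}), then translate $w^\T e=p$ into (\rmnum{2}) and expand $w^\T w=p^2$ (using disjointness of $I$ and $D$ and the sign information to convert $u_k$ into $|u_k|$) to get (\rmnum{4}). Your parenthetical remark that the hypothesis $u^\T u\le p^2$ is not actually needed in the derivation is also consistent with the paper's own proof, which likewise never uses it.
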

\begin{proof}
	By Lemma \ref{atm3}, we know that $w$ can be expressed as in \eqref{wu} where $I$ and $D$ are disjoint subsets of $\{1,2,\ldots,n\}$ satisfying (\rmnum{1}) and  (\rmnum{3}). By \eqref{wu}, we have
	$$w^\T e=u^\T e +p(|I|- |D|)=p+sp+p(|I|- |D|).$$
	Thus, $w^\T e=p$ is equivalent to (\rmnum{2}). It remains to check  (\rmnum{4}).  Due to  (\rmnum{3}),  we can rewrite  \eqref{wu} as
	\begin{equation}
	w=u-p\sum_{k\in I\cup D}\sgn(u_k)e_k.
	\end{equation}
	Since  $e_k$'s are standard unit vectors, we have
	\begin{equation}
w^\T w=u^\T u+p^2\sum_{k\in I\cup D} e_k^\T e_k-2p\sum_{k\in I\cup D} \sgn(u_k)u^\T e_k=u^\T u+p^2(|I|+|D|)-2p\sum_{k\in I\cup D} |u_k|.
	\end{equation}
Now it is straightforward to see that $w^\T w=p^2$ if and only if (\rmnum{4}) holds.
\end{proof}
The following table gives a more visual description of Lemma \ref{adm}. We may call the desired set $I$ (resp. $D$) an \emph{increasing} subset (resp. \emph{decreasing} subset). For any $s\in \{-3,-2,\ldots,3\}$, all possible pairs $(|I|,|D|)$ for the sizes of $I$ and $D$ are rather restricted due to (\rmnum{1}) and (\rmnum{2}). For example, when $s=-3$, we must have $(|I|,|D|)=(3,0)$; when $s=-1$, we have $(|I|,|D|)=(1,0)$, or $(2,1)$. In Table 1, we use `$\uparrow\uparrow\uparrow$' to denote an adjustment strategy corresponding to the case $(|I|,|D|)=(3,0)$, that is, $w$ is obtained from $u$ by a fixed increasement on three (negative) entries. The expression $\frac{1}{2p}u^\T u+p$  attached to `$\uparrow\uparrow\uparrow$'  corresponds to (\rmnum{4}) as $|I|=3$ and $|D|$=0. The symbol `$--$' at the middle of Table \ref{allperfect} means $I=D=\emptyset$. This only happens when the length of the shortest $p$-representative is exactly $p$.
\begin{table}[htbp]
	\renewcommand\arraystretch{1.5}
	\centering
	\caption{\label{allperfect} Generating all possible perfect $p$-representatives from the shortest one}
	\begin{tabular}{c|c}
		\toprule
		$\frac{1}{p}(u^\T e-p)$ &adjustment strategy\\
		\midrule
		$-3$&$\uparrow\uparrow\uparrow$($\frac{1}{2p}u^\T u+p$)\\
		$-2$&$\uparrow\uparrow$($\frac{1}{2p}u^\T u+\frac{p}{2}$)\\
		$-1$&$\uparrow$($\frac{1}{2p}u^\T u$)\quad or \quad $\uparrow\uparrow\downarrow$($\frac{1}{2p}u^\T u+p$) \\
						
		$0$&$--$($\frac{1}{2p}u^\T u-\frac{p}{2}=0$) \quad or \quad $\uparrow\downarrow$($\frac{1}{2p}u^\T u+\frac{p}{2}$)\\
		$1$&$\downarrow$($\frac{1}{2p}u^\T u$)\quad or \quad $\uparrow\downarrow\downarrow$($\frac{1}{2p}u^\T u+p$)\\
		$2$&$\downarrow\downarrow$($\frac{1}{2p}u^\T u+\frac{p}{2}$)\\
		$3$&$\downarrow\downarrow\downarrow$($\frac{1}{2p}u^\T u+p$)\\
		\bottomrule
	\end{tabular}
\end{table}
\subsection{The algorithm}
Now we can summarize the above discussions in the following algorithm.
\begin{algorithm}[t]
\caption{Finding generalized cospectral mate}
\hspace*{0.02in} {\bf Input:} 
a graph $G\in\mathcal{F}_n^p$.\\
\hspace*{0.02in} {\bf Output:} 
DGS or the unique generalized cospectral mate of $G$.
\begin{algorithmic}[1]
\State Compute any nontrivial solution $v$ to $W^\T z\equiv 0\pmod{p}$.   \;

\If {$v^\T v\equiv 0\pmod{p}$}
\State Set $S:=\{i\colon\,v_i\equiv0\pmod{p}\}$.\;
\State Remove zero entries $v_i, i\in S$ from  $v$ to obtain  a vector $v^*$.\;
\State Set $\mathcal{R}:=\emptyset$.
\For{ $k$ from $1$ to $p-1$} 
\State  Compute the shortest $p$-representative $u$ of $kv^*$.\;
\If{$|\frac{1}{p}(u^\T e-p)|\le 3$ and $u^\T u\le p^2$}
\State Find all possible perfect $p$-representatives  from $u$ by Table \ref{allperfect}.
\State Update $\mathcal{R}$ by  appending all  perfect $p$-representatives of $kv^*$.
\If{$|\mathcal{R}|=n-|S|$}
\State Construct a primitive matrix $Q$ using $\mathcal{R}$ and unit vectors $e_i$'s, $i\in S$.
\State \Return graph $H$ with adjacency matrix  $Q^\T A(G)Q$.
\EndIf
\EndIf
\EndFor
\EndIf
\State \Return DGS.
\end{algorithmic}
\end{algorithm}
We give two examples to illustrate Algorithm 1.
\begin{example}\label{exmgcd}\normalfont
Let $n$=16 and $G$ be the graph  with adjacency matrix
$$ A=\scriptsize{
\left(
\begin{array}{cccccccccccccccc}
	0 & 1 & 0 & 1 & 1 & 1 & 1 & 0 & 0 & 1 & 1 & 1 & 1 & 1 & 0 & 0 \\
	1 & 0 & 0 & 1 & 1 & 0 & 0 & 0 & 0 & 0 & 0 & 1 & 0 & 0 & 1 & 1 \\
	0 & 0 & 0 & 0 & 1 & 1 & 0 & 0 & 0 & 1 & 1 & 1 & 1 & 1 & 0 & 0 \\
	1 & 1 & 0 & 0 & 1 & 0 & 0 & 1 & 1 & 0 & 0 & 1 & 1 & 0 & 0 & 1 \\
	1 & 1 & 1 & 1 & 0 & 0 & 0 & 0 & 0 & 0 & 1 & 0 & 0 & 0 & 1 & 1 \\
	1 & 0 & 1 & 0 & 0 & 0 & 1 & 1 & 1 & 1 & 0 & 0 & 0 & 0 & 0 & 0 \\
	1 & 0 & 0 & 0 & 0 & 1 & 0 & 0 & 0 & 0 & 1 & 0 & 0 & 1 & 0 & 0 \\
	0 & 0 & 0 & 1 & 0 & 1 & 0 & 0 & 1 & 1 & 0 & 0 & 0 & 0 & 0 & 0 \\
	0 & 0 & 0 & 1 & 0 & 1 & 0 & 1 & 0 & 0 & 0 & 0 & 0 & 0 & 0 & 0 \\
	1 & 0 & 1 & 0 & 0 & 1 & 0 & 1 & 0 & 0 & 1 & 1 & 1 & 0 & 1 & 1 \\
	1 & 0 & 1 & 0 & 1 & 0 & 1 & 0 & 0 & 1 & 0 & 1 & 0 & 1 & 0 & 1 \\
	1 & 1 & 1 & 1 & 0 & 0 & 0 & 0 & 0 & 1 & 1 & 0 & 1 & 1 & 0 & 1 \\
	1 & 0 & 1 & 1 & 0 & 0 & 0 & 0 & 0 & 1 & 0 & 1 & 0 & 0 & 1 & 1 \\
	1 & 0 & 1 & 0 & 0 & 0 & 1 & 0 & 0 & 0 & 1 & 1 & 0 & 0 & 1 & 0 \\
	0 & 1 & 0 & 0 & 1 & 0 & 0 & 0 & 0 & 1 & 0 & 0 & 1 & 1 & 0 & 0 \\
	0 & 1 & 0 & 1 & 1 & 0 & 0 & 0 & 0 & 1 & 1 & 1 & 1 & 0 & 0 & 0 \\
\end{array}
\right)}.
$$ Using Mathematica, we can find that $G\in \mathcal{F}_n^p$ for $p=5$. Indeed,  the last invariant factor of $W$ is $$d_n=2\times 5^2\times 11\times 41\times 28573\times 260723\times71447889577.$$
A nontrivial solution to $W^\T z\equiv 0\pmod{5}$ is $v=(4, 0, 0, 0, 0, 0, 2, 1, 2, 1, 0, 0, 2, 2, 0, 1)^\T$. Clearly, $v^\T v\equiv 0\pmod{5}$. Now $S=\{2,3,4,5,6,11,12,15\}$,  the indices for the zero entries of $v$. Remove these zero entries we obtain $v^*=(4, 2, 1, 2, 1, 2, 2, 1)^\T$. Table \ref{allp5} illustrates the iterations of the \emph{for} loop.
\begin{table}[htbp]\label{tbexa}
	\small
	\renewcommand\arraystretch{1.5}
	\centering
	\caption{\label{allp5} Sufficient  perfect $5$-representatives}
	\begin{tabular}{c|c|c|c|c}
	\toprule
	$k$ &shortest $5$-representative $u$&$\frac{1}{5}(u^\T e-5)$&$u^\T u$&perfect $p$-representatives\\
	\midrule
\multirow{4}{*}{$1$}&
\multirow{4}{*}{$(-1,2,1,2,1,2,2,1)^\T$}&
\multirow{4}{*}{$1$}&\multirow{4}{*}{$20$} &$(-1, -3, 1, 2, 1, 2, 2, 1)^\T$
	\\
	& &  &&$(-1, 2, 1, -3, 1, 2, 2, 1)^\T$ \\
	& &  &&$(-1, 2, 1, 2, 1, -3, 2, 1)^\T$ \\
	& &  &&$(-1, 2, 1, 2, 1, 2, -3, 1)^\T$ \\
	\hline

\multirow{1}{*}{$2$}&
\multirow{1}{*}{$(-2, -1, 2, -1, 2, -1, -1, 2)^\T$}&
\multirow{1}{*}{$-1$}&\multirow{1}{*}{$20$}& $(3, -1, 2, -1, 2, -1, -1, 2)^\T$\\
\hline
\multirow{3}{*}{$3$}&
\multirow{3}{*}{$(2, 1, -2, 1, -2, 1, 1, -2)^\T$}&
\multirow{3}{*}{$-1$}&\multirow{3}{*}{$20$}& $(2, 1, 3, 1, -2, 1, 1, -2)^\T$ \\
& &  &&$(2, 1, -2, 1, 3, 1, 1, -2)^\T$ \\
& &  &&$(2, 1, -2, 1, -2, 1, 1, 3)^\T$ \\
	\bottomrule
\end{tabular}
\end{table}
In the third  iteration, $|\mathcal{R}|$ reaches 8, which is the dimension of $v^*$. Now, using  $\mathcal{R}$ and $S$, we can  construct $$
Q=\scriptsize{\frac{1}{5}\left(
\begin{array}{cccccccccccccccc}
-1 & -1 & -1 & -1 & 3 & 2 & 2 & 2 & 0 & 0 & 0 & 0 & 0 & 0 & 0 & 0 \\
0 & 0 & 0 & 0 & 0 & 0 & 0 & 0 & 5 & 0 & 0 & 0 & 0 & 0 & 0 & 0 \\
0 & 0 & 0 & 0 & 0 & 0 & 0 & 0 & 0 & 5 & 0 & 0 & 0 & 0 & 0 & 0 \\
0 & 0 & 0 & 0 & 0 & 0 & 0 & 0 & 0 & 0 & 5 & 0 & 0 & 0 & 0 & 0 \\
0 & 0 & 0 & 0 & 0 & 0 & 0 & 0 & 0 & 0 & 0 & 5 & 0 & 0 & 0 & 0 \\
0 & 0 & 0 & 0 & 0 & 0 & 0 & 0 & 0 & 0 & 0 & 0 & 5 & 0 & 0 & 0 \\
-3 & 2 & 2 & 2 & -1 & 1 & 1 & 1 & 0 & 0 & 0 & 0 & 0 & 0 & 0 & 0 \\
1 & 1 & 1 & 1 & 2 & 3 & -2 & -2 & 0 & 0 & 0 & 0 & 0 & 0 & 0 & 0 \\
2 & -3 & 2 & 2 & -1 & 1 & 1 & 1 & 0 & 0 & 0 & 0 & 0 & 0 & 0 & 0 \\
1 & 1 & 1 & 1 & 2 & -2 & 3 & -2 & 0 & 0 & 0 & 0 & 0 & 0 & 0 & 0 \\
0 & 0 & 0 & 0 & 0 & 0 & 0 & 0 & 0 & 0 & 0 & 0 & 0 & 5 & 0 & 0 \\
0 & 0 & 0 & 0 & 0 & 0 & 0 & 0 & 0 & 0 & 0 & 0 & 0 & 0 & 5 & 0 \\
2 & 2 & -3 & 2 & -1 & 1 & 1 & 1 & 0 & 0 & 0 & 0 & 0 & 0 & 0 & 0 \\
2 & 2 & 2 & -3 & -1 & 1 & 1 & 1 & 0 & 0 & 0 & 0 & 0 & 0 & 0 & 0 \\
0 & 0 & 0 & 0 & 0 & 0 & 0 & 0 & 0 & 0 & 0 & 0 & 0 & 0 & 0 & 5 \\
1 & 1 & 1 & 1 & 2 & -2 & -2 & 3 & 0 & 0 & 0 & 0 & 0 & 0 & 0 & 0 \\
\end{array}
\right)}.
$$
Now $Q^\T AQ$ gives the adjacency matrix for the generalized cospectral mate of $G$. Indeed, direct computation shows that $$Q^\T A Q=\scriptsize{\left(
		\begin{array}{cccccccccccccccc}
		0 & 0 & 0 & 1 & 1 & 0 & 0 & 0 & 0 & 1 & 1 & 0 & 0 & 0 & 1 & 1 \\
		0 & 0 & 0 & 0 & 1 & 0 & 1 & 1 & 0 & 1 & 0 & 0 & 0 & 1 & 1 & 1 \\
		0 & 0 & 0 & 0 & 0 & 1 & 0 & 0 & 0 & 0 & 0 & 0 & 1 & 1 & 0 & 0 \\
		1 & 0 & 0 & 0 & 1 & 0 & 0 & 0 & 0 & 0 & 1 & 0 & 1 & 0 & 0 & 0 \\
		1 & 1 & 0 & 1 & 0 & 0 & 1 & 0 & 1 & 0 & 1 & 1 & 1 & 1 & 1 & 0 \\
		0 & 0 & 1 & 0 & 0 & 0 & 1 & 0 & 0 & 0 & 1 & 0 & 1 & 0 & 0 & 0 \\
		0 & 1 & 0 & 0 & 1 & 1 & 0 & 1 & 0 & 1 & 0 & 0 & 1 & 1 & 1 & 1 \\
		0 & 1 & 0 & 0 & 0 & 0 & 1 & 0 & 1 & 0 & 1 & 1 & 0 & 1 & 1 & 0 \\
		0 & 0 & 0 & 0 & 1 & 0 & 0 & 1 & 0 & 0 & 1 & 1 & 0 & 0 & 1 & 1 \\
		1 & 1 & 0 & 0 & 0 & 0 & 1 & 0 & 0 & 0 & 0 & 1 & 1 & 1 & 1 & 0 \\
		1 & 0 & 0 & 1 & 1 & 1 & 0 & 1 & 1 & 0 & 0 & 1 & 0 & 0 & 1 & 0 \\
		0 & 0 & 0 & 0 & 1 & 0 & 0 & 1 & 1 & 1 & 1 & 0 & 0 & 1 & 0 & 1 \\
		0 & 0 & 1 & 1 & 1 & 1 & 1 & 0 & 0 & 1 & 0 & 0 & 0 & 0 & 0 & 0 \\
		0 & 1 & 1 & 0 & 1 & 0 & 1 & 1 & 0 & 1 & 0 & 1 & 0 & 0 & 1 & 0 \\
		1 & 1 & 0 & 0 & 1 & 0 & 1 & 1 & 1 & 1 & 1 & 0 & 0 & 1 & 0 & 0 \\
		1 & 1 & 0 & 0 & 0 & 0 & 1 & 0 & 1 & 0 & 0 & 1 & 0 & 0 & 0 & 0 \\
		\end{array}
		\right).}$$
\end{example}
\begin{example}\label{exmdgs}\normalfont
	Let $n$=16 and $G$ be the graph  with adjacency matrix
	$$A=\scriptsize{\left(
	\begin{array}{cccccccccccccccc}
	0 & 1 & 0 & 0 & 1 & 1 & 0 & 1 & 0 & 1 & 0 & 1 & 1 & 1 & 1 & 1 \\
	1 & 0 & 1 & 0 & 1 & 0 & 0 & 1 & 0 & 0 & 1 & 1 & 0 & 0 & 1 & 0 \\
	0 & 1 & 0 & 1 & 1 & 0 & 1 & 1 & 1 & 1 & 1 & 0 & 0 & 1 & 0 & 1 \\
	0 & 0 & 1 & 0 & 1 & 1 & 1 & 0 & 1 & 0 & 1 & 0 & 1 & 1 & 0 & 0 \\
	1 & 1 & 1 & 1 & 0 & 1 & 1 & 1 & 1 & 0 & 1 & 1 & 1 & 0 & 0 & 1 \\
	1 & 0 & 0 & 1 & 1 & 0 & 1 & 0 & 0 & 0 & 1 & 0 & 0 & 1 & 0 & 1 \\
	0 & 0 & 1 & 1 & 1 & 1 & 0 & 0 & 1 & 0 & 0 & 1 & 0 & 0 & 0 & 1 \\
	1 & 1 & 1 & 0 & 1 & 0 & 0 & 0 & 1 & 1 & 1 & 1 & 0 & 0 & 0 & 0 \\
	0 & 0 & 1 & 1 & 1 & 0 & 1 & 1 & 0 & 1 & 0 & 0 & 0 & 1 & 0 & 0 \\
	1 & 0 & 1 & 0 & 0 & 0 & 0 & 1 & 1 & 0 & 0 & 0 & 1 & 1 & 1 & 1 \\
	0 & 1 & 1 & 1 & 1 & 1 & 0 & 1 & 0 & 0 & 0 & 0 & 0 & 0 & 0 & 1 \\
	1 & 1 & 0 & 0 & 1 & 0 & 1 & 1 & 0 & 0 & 0 & 0 & 1 & 1 & 1 & 1 \\
	1 & 0 & 0 & 1 & 1 & 0 & 0 & 0 & 0 & 1 & 0 & 1 & 0 & 1 & 0 & 0 \\
	1 & 0 & 1 & 1 & 0 & 1 & 0 & 0 & 1 & 1 & 0 & 1 & 1 & 0 & 1 & 0 \\
	1 & 1 & 0 & 0 & 0 & 0 & 0 & 0 & 0 & 1 & 0 & 1 & 0 & 1 & 0 & 0 \\
	1 & 0 & 1 & 0 & 1 & 1 & 1 & 0 & 0 & 1 & 1 & 1 & 0 & 0 & 0 & 0 \\
	\end{array}
	\right)}.$$
This graph is in $\mathcal{F}_n$, according to the standard decomposition  $d_n=2\times 5^2\times7\times63689\times3118319\times 2740960403$. Now $v=(2, 3, 0, 1, 1, 4, 0, 4, 3, 1, 1, 0, 0, 0, 4, 1)^\T$ is a nontrivial solution to $W^\T z\equiv 0\pmod{5}$. Removing all zero entries, we obtain $v^*=(2, 3, 1, 1, 4, 4, 3, 1, 1, 4, 1)^\T$.
\begin{table}[htbp]
	\renewcommand\arraystretch{1.5}
	\footnotesize
	\centering
	\caption{\label{allp2} Insufficient  perfect $5$-representatives}
	\begin{tabular}{c|c|c|c|c}
		\toprule
		$k$ &shortest $5$-representative $u$&$\frac{1}{5}(u^\T e-5)$&$u^\T u$&perfect $p$-representatives\\
		\midrule
		\multirow{2}{*}{$1$}&
		\multirow{2}{*}{$(2, -2, 1, 1, -1, -1, -2, 1, 1, -1, 1)^\T$}&
		\multirow{2}{*}{$-1$}&\multirow{2}{*}{$20$}& $(2, 3, 1, 1, -1, -1, -2, 1, 1, -1, 1)^\T$ \\
		& & & & $(2, -2, 1, 1, -1, -1, 3, 1, 1, -1, 1)^\T$ \\
			\hline
		
			\multirow{1}{*}{$2$}&
		\multirow{1}{*}{ $(-1, 1, 2, 2, -2, -2, 1, 2, 2, -2, 2)^\T$}&
		\multirow{1}{*}{$0$}&\multirow{1}{*}{$35$}&  \\
		\hline

		\multirow{1}{*}{$3$}&
		\multirow{1}{*}{$(1, -1, -2, -2, 2, 2, -1, -2, -2, 2, -2)^\T$}&
		\multirow{1}{*}{$-2$}&\multirow{1}{*}{$35$}&  \\
		\hline
		\multirow{1}{*}{$4$}&
		\multirow{1}{*}{$(-2, 2, -1, -1, 1, 1, 2, -1, -1, 1, -1)^\T$}&
		\multirow{1}{*}{$-1$}&\multirow{1}{*}{$20$}&  $(3, 2, -1, -1, 1, 1, 2, -1, -1, 1, -1)^\T$ \\
		\bottomrule
	\end{tabular}
\end{table}
Table \ref{allp2} summarizes the execution of Algorithm 1. As the total number of  perfect $p$-representatives is less than the number of nonzero entries in $v$, the graph is DGS.
\end{example}
Table \ref{expt} gives some
experimental results on the DGS-property of graphs with at most 20 vertices. Using Mathematica, for each $n\in \{10,11,\ldots,20\}$, we randomly generate 10,000 graphs. The second column records the number of graphs that are in $\mathcal{F}_n$, while the last column records further the number of graphs which are not DGS, using Algorithm 1.  It seems that the density of $\mathcal{F}_n$ is nearly stable (about 3\%), while the density of non-DGS graphs in $\mathcal{F}_n$ decreases dramatically as $n$ increases.

Haemers \cite{ervdamLAA2003,haemers2016} conjectured that almost all graphs are determined by their spectra. A weaker version of Haemers' conjecture is that almost all graphs are DGS. We note that the
observed phenomenon of the decreasing  density of non-DGS graphs is consistent with the prediction of the weaker version of Haemers' conjecture, and therefore provides some evidences for  it.

\begin{table}[htbp]
	\renewcommand\arraystretch{1.5}
	\centering
	\caption{\label{expt}$\mathcal{F}_n$ and Non-DGS graphs in $\mathcal{F}_n$}
	\begin{tabular}{c|c|c}
		\toprule
		$n$ &\# $\mathcal{F}_n$&\# Non-DGS\\
		\midrule
		$10$ &$278$&52\\
		$11$ &$280$&41\\
		$12$ &$296$&30\\
        $13$ &$323$&22\\
        $14$ &$323$&23\\
        $15$ &$330$&7\\
        $16$ &$344$&3\\
        $17$ &$353$&4\\
        $18$ &$347$&2\\
        $19$ &$300$&0\\
        $20$ &$335$&2\\
		\bottomrule
	\end{tabular}
\end{table}
\section{A conjecture}
In this paper, we have presented an algorithm to check whether a graph $G\in \mathcal{F}_n$ is DGS or not. The key ingredient of the algorithm is to decide whether a vector can generate a primitive matrix. Although this can be done algorithmically, it is still very desirable to  give some more
`evident' conditions either for guaranteeing a vector to generate a primitive matrix, or for  ruling out such a possibility. Motivated by the observed experimental phenomena that the percentage of non-DGS graphs in $\mathcal{F}_n$ has a significant declining trend, we propose the following conjecture for further study.
\begin{conjecture}\label{shortlong}
	Let $v$ be an $n$-dimensional integral vector with each entry nonzero modulo $p$. Suppose that $v^\T e\equiv 0\pmod{p}$ and $v^\T v\equiv 0\pmod{p}$. Then \\
	(\rmnum{1})  If $n\le 8$ then $v$ can always generate some primitive matrix.\\
	(\rmnum{2})  If $n\ge 2p+1$ then $v$ cannot generate any primitive matrix.
\end{conjecture}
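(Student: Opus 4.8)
The plan is to translate both parts into the combinatorics of perfect $p$-representatives, using the characterisation of Section~3.1 that $v$ generates a primitive matrix if and only if $\sum_{k=1}^{p-1}|\mathcal{R}_k|=n$, where $\mathcal{R}_k$ is the set of perfect $p$-representatives of $kv$, together with Lemma~\ref{atm3}, Lemma~\ref{adm}, and the rank-one description of a primitive matrix. The common setup is this: if $v$ (all of whose entries are nonzero mod $p$) generates a primitive matrix $Q$, then $\hat Q=pQ$ is an integral matrix all of whose entries are nonzero (Remark~\ref{intfree}); its columns $w^{(1)},\dots,w^{(n)}$ form an orthogonal basis of $\mathbb{R}^n$ with $(w^{(j)})^\T w^{(j)}=p^2$ and $(w^{(j)})^\T e=p$; and $\hat Q$ has rank one over $\mathbb{F}_p$, so after rescaling the residue vectors every \emph{row} of $\hat Q$ is likewise a perfect $p$-representative of a nonzero multiple of a fixed vector. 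This column/row symmetry is where I expect the decisive leverage to come from in part~(\rmnum{2}).

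For part~(\rmnum{2}) I would argue by contradiction, assuming some admissible $v$ with $n\ge 2p+1$ generates a primitive matrix. Since $v^\T e\equiv v^\T v\equiv0\pmod p$, the shortest $p$-representative $u_k$ of $kv$ satisfies $p\mid u_k^\T e$ and $p\mid u_k^\T u_k$; as every entry of $u_k$ is nonzero we have $u_k^\T u_k\ge n$, and hence $u_k^\T u_k\ge 3p$ whenever $kv$ has a perfect $p$-representative. There are only $p-1$ possible shortest representatives, while the $n\ge 2p+1$ columns of $\hat Q$ are pairwise orthogonal and hence distinct, so at least one column, lying in some class $\mathcal{R}_k$, is obtained from $u_k$ by flipping $m\ge1$ coordinates. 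By Lemma~\ref{adm} this forces $\sum_{l\in I\cup D}|u_{k,l}|=\tfrac{1}{2p}u_k^\T u_k+\tfrac{p}{2}(m-1)$, and bounding the left-hand side by $m\cdot\tfrac{p-1}{2}$ gives $u_k^\T u_k\le p(p-m)\le p(p-1)$. Together with $u_k^\T u_k\ge n$ this yields $n\le p(p-1)$, which already contradicts $n\ge 2p+1$ when $p=3$ and so settles that case. For $p\ge5$ there remains a gap between $p(p-1)$ and $2p$, and closing it is the main obstacle. To do so I would try to combine: (a) the column/row symmetry, so that the same bounds hold simultaneously for the rows of $\hat Q$; (b) the observation that two perfect $p$-representatives in one class $\mathcal{R}_k$ differ in exactly two coordinates, each by $\pm p$ (their difference has squared length $2p^2$); and (c) a double count of $\sum_{i,j}\hat Q_{ij}(\hat Q_{ij}-1)=n(p^2-p)$, in which every entry different from $1$ contributes at least $2$, to force the entries of $\hat Q$ to be so heavily concentrated at the value $1$ that an orthogonal family of $n\ge 2p+1$ such columns cannot exist.

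For part~(\rmnum{1}) I would argue constructively. After scaling so that $v_1\equiv1\pmod p$ and permuting coordinates, the constraints that all entries be nonzero and $v^\T e\equiv v^\T v\equiv0\pmod p$ restrict the residue pattern of $v$ modulo $p$ to a short list of shapes; for $n\le8$ I would run through these shapes and, for each, exhibit an explicit integral matrix $\hat Q$ satisfying $\hat Q^\T\hat Q=p^2I$, $\hat Q^\T e=pe$, rank one over $\mathbb{F}_p$, and column space spanned by $v$, i.e.\ a primitive matrix generated by $v$. The natural building blocks are the block matrix $Q$ of Theorem~\ref{GGM} (covering the sub-pattern $(-1,\dots,-1,1,\dots,1)$), the order-$4$ matrix of Example~\ref{np4}, and small conference/Hadamard-type matrices assembled in the quasi-diagonal manner of Lemma~\ref{qdiag}. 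The expected difficulty is that, although short for each fixed $n\le8$, the list of residue patterns is \emph{not} finite as $p$ varies, so the constructions must be produced uniformly in $p$; proving that an admissible primitive matrix exists for every such pattern in every dimension $\le8$ while provably none exists for $n=2p+1$ is the crux, and may well rely on the special abundance of norm-$p^2$ vectors and orthogonal frames in $\mathbb{Z}^n$ available only in small dimensions.
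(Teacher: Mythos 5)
The statement you set out to prove is Conjecture \ref{shortlong}: the authors pose it explicitly as an open problem motivated by their numerical data and give no proof, so there is no argument of theirs to compare yours against, and your proposal must stand on its own. As it stands it is not a proof of either part. What you do correctly establish is a genuine partial result toward part (\rmnum{2}): writing $u_k$ for the shortest $p$-representative of $kv$, the two bounds $u_k^\T u_k\ge n$ (all entries are nonzero integers) and, via Lemma \ref{adm} with $m=|I|+|D|\ge 1$ and $|u_{k,l}|\le\frac{p-1}{2}$, the inequality $u_k^\T u_k\le p(p-m)\le p(p-1)$, together show that once $n>p(p-1)$ no class $\mathcal{R}_k$ can contain any perfect $p$-representative other than possibly $u_k$ itself; hence $\sum_{k=1}^{p-1}|\mathcal{R}_k|\le p-1<n$ and $v$ generates no primitive matrix. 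For $p=3$ this covers every $n\ge 7=2p+1$ and settles part (\rmnum{2}) in that case, which is worth recording as a proposition. But for $p\ge 5$ the entire range $2p+1\le n\le p(p-1)$ is untouched, and your items (a)--(c) are heuristics rather than arguments: the double count $\sum_{i,j}\hat Q_{ij}(\hat Q_{ij}-1)=n(p^2-p)$ forces a preponderance of entries equal to $1$ only when $n>p(p-1)/2$, which still leaves a gap below $p(p-1)$, and you give no reason why an orthogonal family of $n$ such columns cannot exist there. The row/column symmetry you invoke is correct (the rows of $\hat Q$ are indeed perfect $p$-representatives of multiples of a fixed vector) but you do not extract anything from it.

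For part (\rmnum{1}) the proposal contains no proof at all. The set of residue patterns of $v$ modulo $p$ is not a short list: for fixed $n\le 8$ it consists of all $n$-tuples over $\{1,\dots,p-1\}$ satisfying the two congruences, up to scaling and permutation, and this set grows without bound as $p$ varies, so a finite case check is impossible and you would need a construction uniform in $p$ --- for every admissible residue class of $v$ in dimension at most $8$, an integral $\hat Q$ with $\hat Q^\T\hat Q=p^2I$, $\hat Qe=\hat Q^\T e=pe$, and every column congruent to a multiple of $v$ modulo $p$. The building blocks you name (the matrix of Theorem \ref{GGM}, Example \ref{np4}, Hadamard-type blocks assembled as in Lemma \ref{qdiag}) all have very special residue patterns, essentially $\pm 1$ and $\pm 2$, and do not obviously produce, say, a primitive matrix generated from $v\equiv(1,2,4,\dots)\pmod 7$. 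The honest assessment is that your write-up is a promising partial attack on a statement the paper itself leaves open: isolate the $n>p(p-1)$ consequence as a theorem, note that it proves part (\rmnum{2}) for $p=3$, and label everything else as open.
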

We remark that if Conjecture \ref{shortlong} is true, then the final results of Examples \ref{exmgcd} and \ref{exmdgs} can be easily predicted after nontrivial solutions of $W^\T z\equiv 0\pmod{p}$ are found. Indeed, in Example \ref{exmgcd}, the nontrivial solution $v$ of $W^\T z\equiv 0\pmod{5}$ has exactly 8 nonzero entries, which constitutes a vector $v^*$. Noting that $(v^*)^\T e\equiv 0\pmod{p}$ and $(v^*)^\T v^*\equiv 0\pmod{p}$, Conjecture \ref{shortlong} (\rmnum{1}) implies that $v^*$ and hence $v$ can generate a primitive matrix. Nevertheless, in  Example \ref{exmdgs},
the nontrivial solution $v$ has exactly 11 nonzero entries, which  reaches $2p+1$ (noting $p=5$). Thus, we may `predict' that $v$ cannot generate any primitive matrix assuming Conjecture \ref{shortlong} (\rmnum{2}).
\section*{Acknowledgments}
This work is supported by the	National Natural Science Foundation of China (Grant Nos. 12001006, 11971376 and 11971406) and the Scientific Research Foundation of Anhui Polytechnic University (Grant No.\,2019YQQ024).

\end{document}